\newcolumntype{H}{>{\setbox0=\hbox\bgroup}c<{\egroup}@{}}
\numberwithin{equation}{section}
\newtheorem{thm}{Theorem}[section]
\newtheorem{pr}[thm]{Proposition}
\newtheorem{lm}[thm]{Lemma}
\newtheorem{cor}[thm]{Corollary}
\theoremstyle{remark}
\newtheorem{re}[thm]{Remark}
\newtheorem{ex}[thm]{Example}
\newtheorem{pro}[thm]{Problem}
\newtheoremstyle{case}{}{}{}{}{}{:}{ }{}
\theoremstyle{case}
\newtheoremstyle{caso}{}{}{}{}{}{:}{ }{}
\theoremstyle{caso}
\def\op{\operatorname}
\DeclareRobustCommand{\svdots}{% s for `scaling'
  \vbox{%
    \baselineskip=0.33333\normalbaselineskip
    \lineskiplimit=0pt
    \hbox{.}\hbox{.}\hbox{.}%
    \kern-0.2\baselineskip
  }%
}
\newcommand{\lcm}{\operatorname{lcm}}
\newcommand{\stirlingone}[2]{\genfrac{[}{]}{0pt}{}{#1}{#2}}
\newenvironment{proof*}[1]
  {%
   \begin{proof}}
  {\end{proof}}
\let\@@pmod\pmod
\DeclareRobustCommand{\pmod}{\@ifstar\@pmods\@@pmod}
\def\@pmods#1{\mkern4mu({\operator@font mod}\mkern 6mu#1)}
\title[Polynomization of the Bessenrodt--Ono type inequalities]{Polynomization of the Bessenrodt--Ono type inequalities for %restricted and unrestricted
$A$-partition functions}
\author{Krystian Gajdzica}
\address{Institute of Mathematics \\
	Faculty of Mathematics and Computer Science \\
	Jagiellonian University in Cracow
}
\email{krystian.gajdzica@doctoral.uj.edu.pl}
\author{Bernhard Heim}
\address{Faculty of Mathematical and Natural Sciences\\
Mathematical Institute\\
University of Cologne\\
Weyertal 86–90\\
50931 Cologne\\
Germany
}
\email{bheim@uni-koeln.de}
\address{Lehrstuhl A f\"ur Mathematik\\
RWTH Aachen University\\
52056 Aachen\\
Germany
}
\email{bernhard.heim@rwth-aachen.de}
\author{Markus Neuhauser}
\address{Kutaisi International University\\
5/7\\
Youth Avenue\\
Kutaisi\\
4600 Georgia
}
\email{}
\address{Lehrstuhl A f\"ur Mathematik\\
RWTH Aachen University\\
52056 Aachen\\
Germany
}
\email{markus.neuhauser@kiu.edu.ge}
\keywords{partition; restricted partition function; unrestricted partition function; polynomization; Bessenrodt--Ono inequality.}
\subjclass[2020]{Primary 05A17, 11P82; Secondary 05A20.}
\begin{document}

\setlength{\parindent}{10mm}
\maketitle

\begin{abstract}
For an arbitrary set or multiset $A$ of positive integers, we associate the $A$-partition function $p_A(n)$ (that is the number of partitions of $n$ whose parts belong to $A$). We also consider the analogue of the $k$-colored partition function, namely, $p_{A,-k}(n)$. Further, we define a family of polynomials $f_{A,n}(x)$ which satisfy the equality $f_{A,n}(k)=p_{A,-k}(n)$ for all $n\in\mathbb{Z}_{\geq0}$ and $k\in\mathbb{N}$. This paper concerns the polynomization of the
Bessenrodt--Ono type inequality for  $f_{A,n}(x)$:
\begin{align*}
    f_{A,a}(x)f_{A,b}(x)>f_{A,a+b}(x),
\end{align*}
where $a$ and $b$ are arbitrary positive integers; and delivers some efficient criteria for its solutions. Moreover, we also investigate a few basic properties related to both functions $f_{A,n}(x)$ and $f_{A,n}'(x)$.
\end{abstract}

\section{Introduction}
%% bh included
Let $A$ be a multiset of positive integers and finite mutiplicities $\mu(a)$.
We consider a sequence of polynomials $\{f_{A,n}(x)\}_n$:
\begin{equation} \label{def:polynomization}
\sum_{n=0}^{\infty} f_{A,n}(x) \, q^n := 
\prod_{a \in A} \left( \frac{1}{1-q^a}\right)^x.
\end{equation}
The $n$th polynomial evaluated at positive integers $k$
provide the $k$-colored $A$-partition function $p_{A,-k}(n) = f_{A,n}(k)$. 
Interesting multisets are given by $A_I:=\{1\}$, 
$A_{II}:= \{1, \ldots,1\}$ with $\mu(1)=m$, 
$A_{III}:= \mathbb{N}$, and 
$A_{IV}:=\{1,2,2,3,3,3, 4 \ldots \}$ with $\mu(a)=a$.
Therefore, $p_{A_{III}, -1}= f_{A_{III},n}(1) = \mathrm{p}\left( n\right) $
the usual partition
function, and more general $p_{A_{III}, -k}= f_{A_{III},n}(k)$ the $k$-colored partition function. Note that 
$$f_{A_{I},n}(x) = \frac{1}{n!}
\sum_{k=0}^{n} \stirlingone{n}{k}x^k,$$ where the coefficients are the unsigned
Stirling numbers of the first kind. Further, $f_{A_{II},n}(x) = f_{A_I,n}( m \, x)$
and $f_{A_{IV},n}(1)=\op{pp}\left( n\right) $, where $\op{pp}\left( n\right) $
denotes the plane partition function.

In this paper we prove the following Bessenrodt--Ono
type \cite{BO} of inequalities. Let $a,b \in \mathbb{N}$ with $a+b >2$. Let $A$ be a multiset with $1 \in A$. We have
\begin{equation}
f_{A,a}(x) \, f_{A,b}(x) > f_{A,a+b}(x)  \label{BO}
\end{equation}
\begin{itemize}
\item[(i)] for $ x \geq 3$ and $A$ an ordinary set. Note that $f_{A,1}(3)^2 - f_{A,2}(3)=0$. We refer to Theorem \ref{th:set}.
\item[(ii)] for $ x \geq 5$ and $A$ a multiset with $\mu(a) \leq a$ for all all $a \in A$. Note that 
$f_{A,1}(5)^2 - f_{A,2}(5)=0$. We refer to Theorem \ref{th:multiset}.
\end{itemize}
Note that (ii) implies (i) for $x \geq 5$. Moreover, let $A$ be a set, $B$ the multiset derived from $A$, where the elements of $B$ are the elements of $A$ with multiplicity $m$. Then
\begin{equation}
f_{B,n}(x) = f_{A,n}(m \, x).
\end{equation}

%% end
Let us recall some basic definitions and known results.
For a non-negative integer $n$, the partition function
$\mathrm{p}\left( n\right) $ enumerates all possible sequences $$\lambda=(\lambda_1,\lambda_2,\ldots,\lambda_j)$$ of positive integers such that $\lambda_1\geq\lambda_2\geq\cdots\geq\lambda_j\geq1$ and 
\begin{equation*}
    n=\lambda_1+\lambda_2+\cdots+\lambda_j.
\end{equation*}
Elements $\lambda_i$ are called the parts of the partition $\lambda$. For example, there are $5$ partitions of $4$: $(4)$, $(3,1)$, $(2,2)$, $(2,1,1)$ and $(1,1,1,1)$. Thus, $\mathrm{p}\left( 4\right) =5$.
It should be pointed out that
$\mathrm{p}\left( n\right) =0$ if $n$ is negative, and
$\mathrm{p}\left( 0\right) =1$---as
there is only the empty partition in this case. In the 1740's Euler discovered the generating function for $\mathrm{p}\left( n\right) $
\begin{equation*}
    \sum_{n=0}^\infty \mathrm{p}\left( n\right) q^n=\prod_{i=1}^\infty\frac{1}{1-q^i}.
\end{equation*}
There is a wealth of literature devoted to the partition theory. For a general introduction to the subject, we encourage the reader to see Andrews' books \cite{GA2, GA1} as well as \cite{AK,H,Ono1,Sills}.

However, it also motivated 
to investigate other similar problems for combinatorial sequences. For instance, in 2016 Bessenrodt and Ono \cite{BO} proved the following result.

\begin{thm}[Bessenrodt, Ono]\ \\
    \label{thm0.1}Let $a$ and $b$ be natural numbers such that $a+b>9$. Then
    \begin{equation*}
        \mathrm{p}\left( a\right) \mathrm{p}\left( b\right) >\mathrm{p}\left( a+b\right) .
    \end{equation*}
\end{thm}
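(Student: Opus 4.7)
The plan is to combine a sharp asymptotic bound for $\mathrm{p}(n)$ with a finite verification for small values. First I would normalize: by symmetry of the inequality we may assume $a \le b$, and observe that $a = 1$ must be excluded because $\mathrm{p}(1)\mathrm{p}(b) = \mathrm{p}(b) < \mathrm{p}(b+1)$; hence Theorem \ref{thm0.1} is to be read with the implicit hypothesis $a, b \ge 2$. Setting $n = a+b$, the task becomes to show $\mathrm{p}(a+b) < \mathrm{p}(a)\mathrm{p}(b)$ for $10 \le a+b$ and $2 \le a \le b$.

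The analytic heart would be a Lehmer--Rademacher two-sided estimate
\begin{equation*}
\frac{\sqrt{12}}{24n-1}\Bigl(1 - \tfrac{1}{\sqrt{n}}\Bigr) e^{\mu(n)} \;<\; \mathrm{p}(n) \;<\; \frac{\sqrt{12}}{24n-1}\Bigl(1 + \tfrac{1}{\sqrt{n}}\Bigr) e^{\mu(n)},
\end{equation*}
with $\mu(n) = \tfrac{\pi}{6}\sqrt{24n-1}$, valid for all $n \ge 1$. Substituting these into the ratio $\mathrm{p}(a+b)/(\mathrm{p}(a)\mathrm{p}(b))$ isolates the exponential factor $e^{\mu(a+b) - \mu(a) - \mu(b)}$. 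A direct square-root manipulation shows $\mu(a)+\mu(b) > \mu(a+b)$ for all $a,b \ge 1$, and the gap $\mu(a)+\mu(b)-\mu(a+b)$ grows without bound once $\min(a,b) \ge 2$ and $a+b \to \infty$. Consequently the exponential factor eventually dominates the polynomial prefactor $(24a-1)(24b-1)/\bigl(12(24n-1)\bigr)$, producing an explicit threshold $N_{0}$ such that the inequality (BO) holds whenever $a\ge 2$ and $a+b \ge N_{0}$.

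For the finite window $10 \le a+b < N_{0}$ with $2 \le a \le b$, I would check $\mathrm{p}(a)\mathrm{p}(b) > \mathrm{p}(a+b)$ directly from the table of values of $\mathrm{p}$. The main obstacle is calibrating the Lehmer bounds sharply enough to keep $N_{0}$ within a range that permits a manageable tabulation: the worst case is $a=2$, where the polynomial prefactor $(b+2)/(2b)$ tends to $1/2$ and only the exponential slack $\mu(2)+\mu(b)-\mu(b+2)$ rescues the inequality. An appealing alternative strategy, used by Bessenrodt and Ono in \cite{BO}, is an inductive reduction exploiting a log-concavity type relation $\mathrm{p}(a)\mathrm{p}(b) \ge \mathrm{p}(a-1)\mathrm{p}(b+1)$ for $a \le b$, which reduces the problem to the near-diagonal case $a \approx b$ where the exponential gap is maximal; this sidesteps the need for extremely tight constants at the cost of proving monotonicity of the ratio $\mathrm{p}(n+1)/\mathrm{p}(n)$ as an input.
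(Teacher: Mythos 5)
First, some context: the paper does not prove Theorem \ref{thm0.1} at all --- it is quoted from \cite{BO}, with the original proof described only as ``based on asymptotic estimates due to Rademacher \cite{R} and Lehmer \cite{L}'', and with the combinatorial proof of \cite{AGM} and the inductive proof of \cite{HN3} cited as alternatives. Your plan is therefore the intended one in spirit: Lehmer's two-sided bound, the subadditivity $\mu(a)+\mu(b)>\mu(a+b)$ (which indeed follows from squaring, since $2\sqrt{(24a-1)(24b-1)}>1$), and a finite verification. You are also right that the statement as reproduced in the paper is missing a hypothesis: for $a=1$ one has $\mathrm{p}(1)\mathrm{p}(b)=\mathrm{p}(b)<\mathrm{p}(b+1)$, so the condition $a,b>1$ from \cite{BO} must be restored. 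Catching that is correct and worth recording.

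There is, however, a concrete gap in the analytic step. You assert that the gap $\mu(a)+\mu(b)-\mu(a+b)$ ``grows without bound once $\min(a,b)\ge 2$ and $a+b\to\infty$'', and that this produces a single threshold $N_0$ below which only a finite table remains. Neither assertion survives your own worst case $a=2$: for fixed $a$ the gap increases only to the finite limit $\mu(a)$, and at $a=2$ the Lehmer lower bound $\frac{\sqrt{12}}{47}\bigl(1-\tfrac{1}{\sqrt{2}}\bigr)e^{\mu(2)}\approx 0.78$ undershoots $\mathrm{p}(2)=2$ by a factor of about $2.6$. Consequently, if the two-sided bounds are substituted into all three of $\mathrm{p}(2)$, $\mathrm{p}(b)$, $\mathrm{p}(b+2)$, the resulting upper estimate for $\mathrm{p}(b+2)/\bigl(\mathrm{p}(2)\mathrm{p}(b)\bigr)$ tends to $1/0.78\approx 1.3>1$ as $b\to\infty$, so the method certifies nothing for $a=2$ no matter how large $b$ is: the claimed $N_0$ does not exist and your ``finite window'' is infinite. (For fixed $a\ge 3$ the analogous limit is $<1$ and the plan does close.) Your discussion of the $a=2$ case mixes the exact prefactor $2$ with the exponential slack $e^{\mu(2)}$, which cannot both be in play at once. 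The standard repair is to treat the finitely many small values of $a$ separately, using the exact value of $\mathrm{p}(a)$ and Lehmer's estimate only for $\mathrm{p}(b)$ and $\mathrm{p}(a+b)$, or else to first reduce to the near-diagonal case via the monotonicity argument you sketch at the end; either patch is routine, but as written the main line of the argument does not go through.
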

Their proof is based on asymptotic estimates due to Rademacher
\cite{R} and Lehmer \cite{L}. It is worth noting that there are also other proofs. Alanazi, Gagola,
and Munagi \cite{AGM} show the above theorem in a combinatorial manner. On the other hand, Heim and Neuhauser \cite{HN3} present the reasoning via induction on $a+b$ to obtain the aforementioned property.

It should be pointed out that Theorem \ref{thm0.1} is not only art of art's sake, but it allows us to determine the value of

\begin{equation*}
    \max{\mathrm{p}\left( n\right) }=\max{\left( \mathrm{p}\left( \bm{\lambda}\right) :\bm{\lambda} \text{ is a partition of } n\right)},
\end{equation*}
where $\mathrm{p}\left( \bm{\lambda}\right) =\prod_{i=1}^{j}\mathrm{p}\left( \lambda _{i}\right) $ for 
$\bm{\lambda}=(\lambda_1,\lambda_2,\ldots,\lambda_j)$. For more details, we encourage to see Bessenrodt and Ono's article \cite{BO}.

Since 2016, 
a lot of people have been examining analogous inequalities for various types of partition functions, among others, Chern, Fu,
and Tang \cite{Chern}; Beckwith and Bessenrodt \cite{BB}; Hou
and Jagadeesan \cite{Hou}; Males \cite{Males}; Dawsey and Masri \cite{Dawsey}; Heim, Neuhauser,
and Tr\"{o}ger \cite{HNT2}; Gajdzica \cite{KG2}. 

However, it turns out that we can also pass from the discrete task to the
continuous one by considering the appropriate family of polynomials---that
is \linebreak Gian–Carlo Rota’s advice to study problems in combinatorics and number theory in terms of roots of polynomials. In 1955, Newman \cite{Newman} began studying a family of polynomials $P_n(x)$ with exceptional properties. These polynomials are defined via the recursive formula $P_0(x) := 1$ and
\begin{equation*}
    P_n(x):=\frac{x}{n}\sum_{j=1}^n\sigma(j)P_{n-j}(x)
\end{equation*}
for $n\geq1$, where $\sigma(m):=\sum _{\ell \mid m}\ell $
is the sum of divisors of $m\in\mathbb{N}$. Each polynomial $P_n(x)$ is of degree $n$, and is integer-valued. Further, these functions were investigated by Heim et al. \cite{HN4, HN5, HN6}. In particular, they exhibited a practical formula for the derivatives $P_n'(x)$ for all $n\in\mathbb{N}$ \cite{HN4}, namely,
\begin{equation*}
    P_n'(x)=\sum_{j=1}^n\frac{\sigma(j)}{j}P_{n-j}(x).
\end{equation*}
Moreover, Heim and Neuhauser \cite{HN5} showed that the equality
\begin{equation*}
    P_n(k)=p_{-k}(n)
\end{equation*}
is true for all positive integers $k$ and $n$, where
$p_{-k
}\left( n
\right) $ denotes the number of $k$-colored partitions of $n$---in
other words the generating function corresponding to $p_{-k}(n)$ is given by
\begin{equation*}
    \sum_{n=0}^\infty p_{-k}(n)q^n=\prod_{i=1}^\infty\frac{1}{(1-q^i)^k}.
\end{equation*}
Furthermore, Heim, Neuhauser, and Tr\"{o}ger \cite{HN6} proved the
Bessenrodt--Ono type inequality for polynomials $P_n(x)$. More specifically, they obtained the following result.
\begin{thm}[Heim, Neuhauser, Tr\"{o}ger] \ \\
    \label{thm0.2}Let $a$ and $b$ be positive integers such that $a+b>2$, and let $x>2$. Then, we have that 
    \begin{equation*}
        P_a(x)P_b(x)>P_{a+b}(x).
    \end{equation*}
    If $x=2$, then the above inequality is valid for all $a+b>4$.
\end{thm}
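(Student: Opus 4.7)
The plan is induction on $n := a+b$, using the recursion
\begin{equation*}
n P_n(x) = x \sum_{j=1}^{n} \sigma(j) P_{n-j}(x)
\end{equation*}
as the only algebraic tool available. The base cases are handled by direct computation: the recursion gives $P_1(x) = x$, $P_2(x) = x(x+3)/2$, and $P_3(x) = x(x+1)(x+8)/6$, so
\begin{equation*}
P_1(x) P_2(x) - P_3(x) = \frac{x(x-2)(x+2)}{3},
\end{equation*}
which is strictly positive exactly when $x > 2$. This same factorization explains why $x = 2$ is borderline and forces the stronger hypothesis $a+b > 4$ there: at $x = 2$ the split $(a,b) = (1,3)$ still yields equality ($P_1(2) P_3(2) = 2 \cdot 10 = 20 = P_4(2)$), so the smallest base case for $x = 2$ becomes $n = 5$, which I would check by hand (for instance $P_2(2) P_3(2) = 50 > 36 = P_5(2)$).

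For the inductive step I would combine the recursion applied to $P_a$ and to $P_b$ into the symmetric identity
\begin{equation*}
(a+b)\, P_a(x) P_b(x) = x \sum_{j=1}^{a} \sigma(j) P_{a-j}(x) P_b(x) + x \sum_{j=1}^{b} \sigma(j) P_a(x) P_{b-j}(x),
\end{equation*}
which is to be compared with $(a+b)\, P_{a+b}(x) = x \sum_{j=1}^{a+b} \sigma(j) P_{a+b-j}(x)$. The induction hypothesis applied to each product $P_{a-j}(x) P_b(x)$ and $P_a(x) P_{b-j}(x)$ replaces it by $P_{a+b-j}(x)$, with the boundary indices $j = a$ and $j = b$ producing equality rather than strict inequality (since $P_0(x) = 1$).

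The main obstacle is the coefficient-by-coefficient comparison that results: the induction yields two copies of $\sigma(j) P_{a+b-j}(x)$ for each $j \leq \min(a,b)$ but leaves the tail range $\max(a,b) < j \leq a+b$ uncovered. To close the argument I would pair each tail index with one of the unused duplicate head indices, exploiting the monotonicity $P_m(x) \geq P_{m-1}(x)$ for $x \geq 0$ (immediate from positivity of the coefficients of $\prod_i (1-q^i)^{-x}$ viewed as a power series with polynomial-in-$x$ coefficients, which in turn follows inductively from the defining recursion) together with an elementary arithmetic comparison of the partial sums of $\sigma$. Strictness then propagates because at least one interior invocation of the hypothesis is strict whenever $n$ exceeds the base threshold. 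The case $x = 2$ demands extra vigilance, since at this value several inductive factors realize equality; one tracks the surviving ``strict slots'' produced by interior splits, and the upgraded base case $n = 5$ guarantees that at least one strict gain survives into each successive step.
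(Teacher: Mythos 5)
Your overall frame---induction on $a+b$ via the recursion $nP_n(x)=x\sum_{j=1}^{n}\sigma(j)P_{n-j}(x)$, reducing the step to the comparison $\sum_{j=1}^{b}\sigma(j)P_{a+b-j}(x)\geq\sum_{j=1}^{b}\sigma(a+j)P_{b-j}(x)$ (with $a\geq b$)---is exactly how this paper attacks its own analogues (Theorem \ref{thm2.1}), and your base-case bookkeeping ($P_1P_2-P_3=x(x-2)(x+2)/3$; $P_1(2)P_3(2)=20=P_4(2)$ forcing $a+b>4$ at $x=2$) is correct. The gap is in how you propose to close that comparison. Pairing the head index $j$ with the tail index $a+j$ and invoking only the monotonicity $P_m(x)\geq P_{m-1}(x)$ gives $P_{a+b-j}(x)\geq P_{b-j}(x)$ but leaves the factor $\sigma(a+j)/\sigma(j)$---which for $j=1$ is $\sigma(a+1)$, of order $a$---entirely uncompensated, and no ``arithmetic comparison of partial sums of $\sigma$'' can repair this, since $\sum_{j=a+1}^{a+b}\sigma(j)$ exceeds $\sum_{j=1}^{b}\sigma(j)$ by a factor of roughly $1+2a/b$. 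One genuinely needs a superlinear lower bound on the $P_n$; the only elementary one is $P_n(x)\geq\binom{n+x-1}{n}$, and at $x=3$ it yields $\sum_{j<a}P_j(3)\geq\binom{a+2}{3}\sim a^3/6$, which beats $2a^2(1+\ln 2a)$ only for $a\geq 15$ (hence the paper's computer verification of all smaller cases). For $x\in(2,3)$ this bound is only $\sim a^{x-1}$, the induction threshold diverges as $x\to2^{+}$, so no finite list of base cases serves all $x>2$ uniformly; at $x=2$ the bound $\binom{a+1}{2}$ never catches $2a^2(1+\ln 2a)$ at all. The missing idea---used both in Heim--Neuhauser--Tr\"oger's actual proof and in this paper's Theorem \ref{th:set}---is to run the induction only at a single anchor value of $x$ and then propagate to larger $x$ by proving $\frac{d}{dx}\left(P_a(x)P_b(x)-P_{a+b}(x)\right)>0$ via the companion recursion $P_n'(x)=\sum_{j}\frac{\sigma(j)}{j}P_{n-j}(x)$: in the derivative version the troublesome quantity becomes $\frac{\sigma(a+j)}{a+j}\leq 1+\ln(a+j)$, which even the linear bound $P_a(x)\geq a+1$ dominates. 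Your plan has no analogue of this step and no viable substitute for it.

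There is a second concrete failure for $2<x<3$ that you do not address: the inductive step applies the hypothesis to every product $P_{a-j}(x)P_b(x)$, including the slot $a-j=b=1$, but $P_1(x)^2-P_2(x)=x(x-3)/2$ is \emph{negative} on $(2,3)$, so that term works against you and would have to be explicitly absorbed by the surplus elsewhere. Relatedly, your treatment of $x=2$ as merely requiring ``extra vigilance'' about which inductive slots are strict understates the problem: at $x=2$ the entire quantitative engine above breaks down unless one imports the true (superpolynomial) growth of $p_{-2}(n)$, which is why the literature treats the $x=2$ case as a separate theorem rather than a boundary instance of the same induction.
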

It should be pointed out that there are more results of that kind for other families of polynomials. For example, Heim, Neuhauser,
and Tr\"{o}ger \cite{HNT2} considered a family polynomials associated to the so-called plane partition function
$\op{pp}\left( n\right) $. That is the number of partitions of $n$ such that
each part $j$ might appear in $j$ distinct colors. The generating function for
$\op{pp}\left( n\right) $ takes the form
\begin{equation*}
    \sum_{n=0}^\infty \op{pp}\left( n\right) q^n=\prod_{i=1}^\infty\frac{1}{(1-q^i)^i}.
\end{equation*}
For more information about plane partitions and the plane partition function, we refer the reader to \cite[Section~12]{GA2}, \cite[Section 7.20]{Stanley},
and \cite{K}. Further, Li \cite{Li} obtained the
Bessenrodt--Ono type inequalities for the overpartition function $\overline{p}(n)$ and its polynomization. Let us recall that the overpartition function $\overline{p}(n)$ counts the number of partitions of $n$ such that the last occurrence of each distinct part may be overlined. In that case the corresponding generating function is given by
\begin{equation*}
    \sum_{n=0}^\infty\overline{p}(n)q^n=\prod_{i=1}^\infty\frac{1+q^i}{1-q^i}.
\end{equation*}
For more details related to overpartitions and the overpartition function, see \cite{Corteel}.

In this paper we deal with the Bessenrodt--Ono
type inequalities for the so-called $A$-partition function and its polynomization. Let $A$ be an arbitrary (finite or infinite) set or multiset of positive integers. By an $A$-partition of a non-negative integer $n$, we mean any partition $\lambda=(\lambda_1,\lambda_2,\ldots,\lambda_j)$ of $n$ such that $\lambda_i\in A$ for each $1\leq i\leq j$. For the sake of clarity, we also assume that two $A$-partitions of $n$ are considered the same if there is only a difference in the order of their parts. The $A$-partition function $p_A(n)$ enumerates all possible $A$-partitions of $n$. Additionally, we set $p_A(n)=0$ if $n$ is negative, and $p_A(0)=1$. For example, if $A=\{1,2,\textcolor{blue}{2},3,\textcolor{blue}{3}\}$, then $p_A(4)=8$ with: $(\textcolor{blue}{3},1)$, $(3,1)$, $(\textcolor{blue}{2},\textcolor{blue}{2})$, $(\textcolor{blue}{2},2)$, $(2,2)$, $(\textcolor{blue}{2},1,1)$, $(2,1,1)$,
and $(1,1,1,1)$. The generating function for $p_A(n)$ satisfies the equality
\begin{equation}
    \sum_{n=0}^\infty p_A(n)q^n=\prod_{a\in A}\frac{1}{1-q^a}.
\end{equation}
There is an abundance of literature devoted to $A$-partitions, we refer the reader to, for instance, \cite{GA, Beck, Bell, CN, DV, KG1, Gawron, RS, MU2, MUŻ}.

It is proven in the next section that one can assign unique polynomials $f_{A,n}(x)$ to the given $A$-partition function in such a way that $f_{A,n}(k)=p_{A,-k}(n)$ holds for every natural number $k$ and non-negative integer $n$, where $p_{A,-k}(n)$ is, similarly to the case of the classical partition function, the $k$-colored $A$-partition function.

Further, we proceed to the main part of the paper. Using the law of induction and the analogous methods to those presented in Heim, Neuhauser,
and Tr\"{o}ger's paper \cite{HN6}, we prove the following result.   

\begin{thm} \label{th:set}
    For any set $A\subseteq\mathbb{N}$ with $1\in A$, the inequality
    \begin{equation*}
        f_{A,a}(x)f_{A,b}(x)>f_{A,a+b}(x)
    \end{equation*}
    holds for all $x\geq3$ and $a,b\in\mathbb{N}$ except $a=b=1$. For $a=b=1$, we have that 
    $$f_{A,1}(3)f_{A,1}(3)\geq f_{A,2}(3)$$
    with
equality whenever $2\in A$ and 
    $$f_{A,1}(x)f_{A,1}(x)>f_{A,2}(x)$$
    for all $x>3$.
\end{thm}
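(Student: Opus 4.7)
The plan is to proceed by induction on $N := a+b$, adapting the scheme of Heim--Neuhauser--Tr\"oger \cite{HN6}. From the generating function I first compute the low-degree polynomials: $f_{A,0}(x)=1$, $f_{A,1}(x)=x$, and $f_{A,2}(x) = \binom{x+1}{2} + \varepsilon\, x$, where $\varepsilon := 1$ if $2 \in A$ and $0$ otherwise. The $(a,b)=(1,1)$ case is then settled at once by
\[
f_{A,1}(x)^2 - f_{A,2}(x) = \frac{x(x-1-2\varepsilon)}{2},
\]
which is $\ge 0$ for $x \ge 3$, vanishes iff $x=3$ and $2 \in A$, and is strictly positive for $x>3$, matching the theorem.

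The base case $N = 3$, i.e.\ $(a,b)=(1,2)$, is handled by computing $f_{A,3}(x)$ explicitly via the logarithmic derivative recursion
\[
n\,f_{A,n}(x) = x \sum_{m=1}^n \sigma_A(m)\, f_{A,n-m}(x), \qquad \sigma_A(m) := \sum_{\substack{d \mid m \\ d \in A}} d,
\]
and checking that $x f_{A,2}(x) - f_{A,3}(x)$, a cubic in $x$, is positive for $x \ge 3$ in each of the four sub-cases determined by whether $2, 3 \in A$. For the inductive step $N \ge 4$, I take $a+b = N$ with $a \le b$ and combine the recursions for $a f_{A,a}(x)\cdot f_{A,b}(x)$, $b f_{A,b}(x)\cdot f_{A,a}(x)$, and $(a+b) f_{A,a+b}(x)$ into a single identity
\[
(a+b)\bigl[f_{A,a}(x) f_{A,b}(x) - f_{A,a+b}(x)\bigr] = x \sum_{m \ge 1} \sigma_A(m)\, T_m(x),
\]
where the plan is to exhibit each $T_m(x)$ as a nonnegative combination of smaller Bessenrodt--Ono differences $f_{A,a'}(x)f_{A,b'}(x) - f_{A,a'+b'}(x)$ with $a'+b' < N$, together with values $f_{A,k}(x)$. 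Positivity of the latter for $x>0$ follows from the fact that $f_{A,k}(x)$ is a polynomial in $x$ with nonnegative coefficients (visible from $\exp\bigl(x \sum_a \sum_{k \ge 1} q^{ak}/k\bigr)$), and positivity of the Bessenrodt--Ono differences follows from the inductive hypothesis, giving the desired conclusion.

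The main obstacle is constructing this identity so that all coefficients are genuinely nonnegative. The recursion for $(a+b) f_{A,a+b}(x)$ sums $\sigma_A(m) f_{A,a+b-m}(x)$ over $m \in \{1,\dots,a+b\}$, whereas the recursions producing the positive side give sums only over $m \le a$ and $m \le b$; the surplus range $m \in \{b+1,\dots,a+b\}$ must be absorbed by invoking the inductive hypothesis in reverse, i.e.\ by dominating each extra $f_{A,a+b-m}(x)$ by a product $f_{A,a''}(x)f_{A,b''}(x)$ for a judicious splitting $a''+b''=a+b-m$. A second subtlety is that whenever a sub-instance reduces to $(1,1)$ the corresponding term vanishes at $x=3$ when $2\in A$, so strict inequality in the induction has to be secured by at least one non-boundary sub-instance; the hypothesis $N \ge 4$ guarantees such a term exists. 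Making this case analysis and the matching of boundary behavior precise is where the bulk of the technical work lies.
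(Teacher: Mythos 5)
Your base cases, the explicit form of $f_{A,2}(x)$, and the resolution of the exceptional pair $(a,b)=(1,1)$ are all correct, and the engine you propose --- induction on $a+b$ driven by the recursion $nf_{A,n}(x)=x\sum_{m}\sigma_A(m)f_{A,n-m}(x)$ --- is exactly the one the paper uses for its anchor result (Theorem \ref{thm2.1}). The gap sits precisely where you locate ``the bulk of the technical work.'' After pairing off all the terms that the induction hypothesis can handle, what survives is (ordering $a\ge b$ as the paper does, so that the surplus indices stay below $2a$) an inequality of the shape
\[
\sum_{i=1}^{b}\Bigl(x\,\kappa_a\,\sigma_A(i)-a\,\sigma_A(a+i)\Bigr)f_{A,b-i}(x)\ \ge\ 0,\qquad \kappa_a:=\sum_{j=1}^{a}\sigma_A(j)f_{A,a-j}(x)=\tfrac{a}{x}f_{A,a}(x),
\]
and the negative terms here are \emph{not} expressible as a nonnegative combination of smaller Bessenrodt--Ono differences and values $f_{A,k}(x)$: for $A=\{1,a+i,\dots\}$ one has $\sigma_A(a+i)=1+(a+i)$ while $\sigma_A(i)=1$, so nothing formal saves you --- you need the quantitative inputs $f_{A,a}(x)\ge\binom{x+a-1}{a}$ (from the singleton $A=\{1\}$, Lemma \ref{Lemma0}) and $\sigma_A(n)\le n(1+\ln n)$. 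Your proposed fix, ``dominating each extra $f_{A,a+b-m}(x)$ by a product $f_{A,a''}(x)f_{A,b''}(x)$,'' applies the induction hypothesis in the wrong direction: it upper-bounds the quantity being \emph{subtracted}, which only weakens the lower bound you are trying to establish. Worse, the resulting analytic condition, $(a+1)(a+2)\ge 4a(1+\ln(2a))$, fails for $a\le 14$, so this route unavoidably requires a separate finite verification over all $a,b\le 14$ and all relevant subsets $A\subseteq\{1,\dots,a+b\}$ (the paper's Lemma \ref{Lemma1}); your proposal has no mechanism for that range, and since you run the induction for all real $x\ge3$ simultaneously, the finite check would become a polynomial-positivity problem on a ray rather than a list of integer evaluations.

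The paper sidesteps most of this by splitting the proof in two. Theorem \ref{thm2.1} establishes the inequality only at the anchor $x=3$, by the induction you describe, paying the price of the $a\ge15$ threshold plus the computer check. The proof of Theorem \ref{th:set} then runs the induction on $a+b$ on the \emph{derivative}: using $f_{A,n}'(x)=\sum_{j}\frac{\sigma_A(j)}{j}f_{A,n-j}(x)$, the surplus terms carry the weight $\frac{\sigma_A(a+j)}{a+j}\le 1+\ln(a+j)$, which is only logarithmic, and the corresponding estimate $(a+1)(a+2)\ge 2a(1+\ln(2a))$ holds throughout the inductive range with no extra case analysis; positivity of the derivative on $[3,\infty)$ combined with positivity at $x=3$ then closes the induction. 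To repair your one-pass scheme you would have to import the two quantitative bounds above, restrict the analytic step to $\max(a,b)$ large, and supply the finite verification separately --- or, more economically, adopt the anchor-plus-derivative structure.
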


It is worth pointing out that the above property significantly extends Theorem \ref{thm0.2}---in
a sense that instead of taking $\mathbb{N}$, we can consider any finite or infinite set. For example, one may apply it to obtain the polynomization of the
Bessenrodt--Ono type inequality for the $k$-regular partition function for every $k\geq2$. That is an extension of Beckwith and Bessenrodt's theorem, see \cite[Theorem 2.1]{BB}.

Moreover, we also show a similar result for some family of multisets $A$. More precisely, we figure out the following.

\begin{thm}\label{th:multiset}
     If $A$ is such a multiset of natural numbers that $1$ occurs exactly once
in $A$, and each number $j$ appears at most $j$ times in $A$, then the inequality
    \begin{equation*}
        f_{A,a}(x)f_{A,b}(x)>f_{A,a+b}(x)
    \end{equation*}
    holds for all $x\geq5$ and $a,b\in\mathbb{N}$ except $a=b=1$. For $a=b=1$, we have that 
    $$f_{A,1}(5)f_{A,1}(5)\geq f_{A,2}(5)$$
    with
equality whenever $2$ appears exactly two times in $A$ and 
    $$f_{A,1}(x)f_{A,1}(x)>f_{A,2}(x)$$
    for every $x>5$.
\end{thm}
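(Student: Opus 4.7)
\medskip
\noindent
\textbf{Proof plan.} The plan is to follow the induction on $a+b$ used for Theorem \ref{th:set}, with two essential adjustments: the base case $a=b=1$ must be re-examined because the multiplicities shift the crossover from $x=3$ to $x=5$, and the inductive recursion inherits extra factors from the $\mu(j)$ that must be controlled via the hypothesis $\mu(j)\leq j$.

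I would first dispose of the case $a=b=1$ by directly expanding the first two coefficients of \eqref{def:polynomization}. Since $1\in A$ with multiplicity one, the factor $(1-q)^{-x}$ contributes $x$ to the $q$-coefficient, so $f_{A,1}(x)=x$. The $q^{2}$-coefficient gets $\binom{x+1}{2}$ from $(1-q)^{-x}$ and $\mu(2)\,x$ from $(1-q^{2})^{-\mu(2)x}$, giving $f_{A,2}(x)=\tfrac{x(x+1)}{2}+\mu(2)\,x$. Hence
\begin{equation*}
f_{A,1}(x)^{2}-f_{A,2}(x)=\tfrac{1}{2}\,x\bigl(x-1-2\mu(2)\bigr),
\end{equation*}
which is non-negative precisely when $x\geq 1+2\mu(2)$. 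Under $\mu(2)\leq 2$, this yields $x\geq 5$ as the sharp threshold, with equality iff $\mu(2)=2$ and $x=5$; this simultaneously handles the $a=b=1$ claim and pins down the correct cutoff.

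For $a+b\geq 3$ I would proceed by induction, starting from the logarithmic derivative of \eqref{def:polynomization} in $q$, which yields the recursion
\begin{equation*}
n\,f_{A,n}(x)=x\sum_{k=1}^{n}\sigma_{A}(k)\,f_{A,n-k}(x),\qquad \sigma_{A}(k):=\sum_{\substack{a\in A\\ a\mid k}}a\,\mu(a).
\end{equation*}
Following the strategy of Heim--Neuhauser--Tr\"oger \cite{HN6} and the set case (Theorem \ref{th:set}), I would apply this recursion to $f_{A,a+b}(x)$, isolate the $k=1$ term (which equals $x\,f_{A,n-1}(x)$ because $\sigma_{A}(1)=1$), and dominate each tail summand $f_{A,a+b-k}(x)$ by an appropriate product $f_{A,i}(x)f_{A,j}(x)$ via the inductive hypothesis. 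The key quantitative ingredient is the uniform bound $\sigma_{A}(k)\leq\sum_{d\mid k}d^{2}$, which is exactly what the assumption $\mu(d)\leq d$ buys us.

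The main obstacle is the deterioration of these estimates caused by the multiplicities: in the set case one has $\sigma_{A}(k)\leq\sigma(k)$, but here each divisor $d$ may contribute up to $d\cdot d$ rather than $d$, inflating the right-hand side of the recursion. The crux is to verify that the threshold $x\geq 5$ forced by the base case is still sufficient to absorb this inflation in the inductive step, uniformly in $a,b$. The most delicate term is the $k=2$ contribution, where the factor $\mu(2)$ is precisely the quantity responsible for the jump from $x\geq 3$ to $x\geq 5$; once this term is controlled, the remainder of the induction should parallel the set case mutatis mutandis.
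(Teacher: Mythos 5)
Your treatment of the base case $a=b=1$ is correct and in fact a little cleaner than the paper's case split: $f_{A,2}(x)=\binom{x+1}{2}+\mu(2)x$ gives $f_{A,1}(x)^2-f_{A,2}(x)=\tfrac{x}{2}(x-1-2\mu(2))$, which under $\mu(2)\leq2$ yields exactly the threshold $x\geq5$ and the equality case. Your bound $\sigma_A(k)\leq\sum_{d\mid k}d^2=\sigma_2(k)\leq k(2k-1)$ is also the right quantitative ingredient and matches the paper.

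The gap is in the inductive step. You propose a single direct induction on $a+b$ using the recursion $nf_{A,n}(x)=x\sum_k\sigma_A(k)f_{A,n-k}(x)$ at a general real $x\geq5$, and you assert that the rest "parallels the set case." But that is not how the set case (or the multiset case) is actually closed in the paper: both proofs are two-stage. First one proves the inequality at the single evaluation point ($x=3$, resp.\ $x=5$) by the direct induction you describe (Theorems \ref{thm2.1} and \ref{thm4.1}); there the quantitative comparison $x\kappa_a\sigma_A(i)\geq a\sigma_A(a+i)$ reduces to $(a+1)(a+2)(a+3)(a+4)\geq48a(4a-1)$, which only holds for $a\geq9$, and the remaining pairs $a,b\leq8$ are disposed of by a finite computation over the finitely many relevant multisets \emph{at the single point} $x=5$. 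Second, one extends from $x=5$ to all real $x\geq5$ by showing, via formula (\ref{2}) and the induction hypothesis, that $\frac{d}{dx}\bigl(f_{A,a}(x)f_{A,b}(x)-f_{A,a+b}(x)\bigr)>0$ for $x\geq5$; the resulting inequality $(x+1)(x+2)(x+3)(x+4)>96x^2-24x$ holds for \emph{all} $a\geq2$, so no further case analysis is needed there. Your plan collapses these two stages into one, and in doing so it inherits the $a\geq9$ restriction for a continuum of values of $x$: you would be left having to verify $f_{A,a}(x)f_{A,b}(x)>f_{A,a+b}(x)$ for all $a,b\leq8$, all admissible multisets $A\subseteq\{1,\dots,16\}$ with multiplicities, and \emph{all real} $x\geq5$ — a family of polynomial positivity statements that your proposal neither mentions nor reduces to a finite check. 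Without the derivative/polynomization device (or some substitute that confines the finite verification to $x=5$), the induction as you describe it does not close.
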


On the other hand, the above theorem generalizes Heim, Neuhauser,
and Tr\"{o}ger's result devoted to the plane partition function \cite[Theorem 1.3]{HNT2}.
\iffalse
Finally, we would like mention that there is another type of inequality under interest.

In 2015 DeSalvo and Pak \cite{DSP} reproved the theorem of Nicolas \cite{N} that the partition function is log-concave for all $n>25$. More precisely, they showed that
\begin{equation*}
    \mathrm{p}^{2}\left( n\right) >\mathrm{p}\left( n+1\right) \mathrm{p}\left( n-1\right)
\end{equation*}
for all $n\geq26$. In fact, their paper initiated extensive research devoted to the log-concavity of other variations of the partition function (see, e.g., \cite{BKRT, Chen2, Chern, HN1, HN2, HN7, Mauth, Ono, Stanley2, MU}). 
\fi

This manuscript is organized as follows. In Section 2, we introduce necessary notations, define the appropriate family of polynomials $f_{A,n}(x)$ associated to the fixed $A$-partition function and show some of their basic properties. Section 3 deals with the
Bessenrodt--Ono inequalities for both $k$-colored $A$-partition functions $p_{A,-k}(n)$ and the polynomials $f_{A,n}(x)$ for such sets $A$ that $1\in A$ ($\mu(a)=1$ for each $a\in A$). Finally, Section 4 examines the analogues problems to those from Section 3 for such multisets $A$ that $1\in A$ and $\mu(j)\leq j$ for every $j\in\mathbb{N}.$

\section{Preliminaries}

At first, let us fix some notations and conventions. By $\mathbb{R}$,
$\mathbb{N}$, and $\mathbb{Z}_{\geq0}$ we denote
the set of real numbers, the set of positive integers,
and the set of non-negative integers, respectively. For an arbitrary  multiset of positive integers $A$, $\mu(a)$ denotes the number of appearances of the
number $a$ in $A$. Further, for a fixed finite or infinite unbounded set or multiset of positive integers $A$, the $A$-partition function $p_A(n)$ is the number of $A$-partitions of $n$ (actually, if $A$ is infinite, we have to require that $\mu(a)<\infty$ for every $a\in A$, otherwise there will be infinitely many numbers $m$ such that $p_A(m)=\infty$). In such a setting, we also introduce a family of polynomials associated to the $A$-partition function via the power series expansion (\ref{def:polynomization}).

It is straightforward to see that $f_{A,0}(x)=1$ and $f_{A,n}(1)=p_A(n)$. Furthermore, if we consider the $k$-colored $A$-partition function $p_{A,-k}(n)$, then we have that $p_{A,-k}(n)=f_{A,n}(k)$ for all $k\in\mathbb{N}$ and $n\in\mathbb{Z}_{\geq0}$. Moreover, there are recurrence formulae for the polynomials $f_{A,n}(x)$ and $f_{A,n}'(x)$, as well.

\begin{pr}
    For a fixed set (or multiset) $A$ of positive integers, the following identities 
    \begin{equation}\label{1}
    f_{A,n}(x)=\frac{x}{n}\sum_{j=1}^n\sigma_A(j)f_{A,n-j}(x)
\end{equation}
and
\begin{equation}\label{2}
    f_{A,n}'(x)=\sum_{j=1}^n\frac{\sigma_A(j)}{j}f_{A,n-j}(x),
\end{equation}
are true for all $n\in\mathbb{N}$, where $\sigma_A(i)$ is a sum of those divisors of $i$ that belong to $A$.
\end{pr}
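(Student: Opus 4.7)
The plan is to derive both identities by taking logarithmic derivatives of the generating function
\begin{equation*}
F(q,x):=\sum_{n=0}^{\infty} f_{A,n}(x)\,q^n=\prod_{a\in A}\left(\frac{1}{1-q^a}\right)^x,
\end{equation*}
viewed as a formal power series in $q$ with coefficients in $\mathbb{R}[x]$. Taking formal logarithms and expanding the Mercator series yields
\begin{equation*}
\log F(q,x)=-x\sum_{a\in A}\log(1-q^a)=x\sum_{a\in A}\sum_{m=1}^{\infty}\frac{q^{am}}{m},
\end{equation*}
where the outer sum over $A$ is counted with multiplicity, so that each distinct value $a$ contributes a factor $\mu(a)$.

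For the first identity, I would differentiate with respect to $q$ and multiply by $q$, obtaining
\begin{equation*}
q\,\frac{\partial_q F}{F}=x\sum_{a\in A}\sum_{m=1}^{\infty}a\,q^{am}=x\sum_{j=1}^{\infty}\left(\sum_{\substack{a\mid j\\ a\in A}}a\right)q^{j}=x\sum_{j=1}^{\infty}\sigma_A(j)\,q^j,
\end{equation*}
since the coefficient of $q^j$ collects the contributions $a\cdot\mu(a)$ for every $a\in A$ dividing $j$, which is exactly $\sigma_A(j)$ by the stated definition (interpreting divisors with multiplicity in the multiset case). Clearing the denominator gives $q\,\partial_q F = x\,F\cdot\sum_{j\geq 1}\sigma_A(j)\,q^j$, and extracting the coefficient of $q^n$ on both sides produces
\begin{equation*}
n\,f_{A,n}(x)=x\sum_{j=1}^{n}\sigma_A(j)\,f_{A,n-j}(x),
\end{equation*}
which is \eqref{1} after dividing by $n$.

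For the second identity, I would instead differentiate $\log F(q,x)$ with respect to $x$. Since the series in $x$ is linear, this gives
\begin{equation*}
\frac{\partial_x F}{F}=\sum_{a\in A}\sum_{m=1}^{\infty}\frac{q^{am}}{m}=\sum_{j=1}^{\infty}\left(\sum_{\substack{a\mid j\\ a\in A}}\frac{a}{j}\right)q^j=\sum_{j=1}^{\infty}\frac{\sigma_A(j)}{j}\,q^j,
\end{equation*}
where setting $m=j/a$ converts $1/m$ into $a/j$, and summing $a$ (with multiplicity) over divisors of $j$ lying in $A$ produces $\sigma_A(j)$. Multiplying through by $F$ and comparing the coefficient of $q^n$ yields
\begin{equation*}
f_{A,n}'(x)=\sum_{j=1}^{n}\frac{\sigma_A(j)}{j}\,f_{A,n-j}(x),
\end{equation*}
which is \eqref{2}.

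There is no real obstacle: everything is a routine formal-power-series manipulation, and the only point requiring care is to interpret $\sigma_A$ correctly for multisets, which matches the definition supplied. The argument also makes it clear that the two identities share a common source, namely the logarithmic derivatives of $F(q,x)$ in its two variables, which gives a unified explanation of their parallel structure.
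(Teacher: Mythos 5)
Your proposal is correct and is essentially the paper's own argument: the paper likewise defines $F(q,x)$ and compares coefficients of $q^n$ in $\partial_q F=(\partial_q\log F)F$ and $\partial_x F=(\partial_x\log F)F$, merely omitting the explicit computation of the logarithmic derivatives that you carry out. Your version just spells out the details (the Mercator expansion and the identification of the coefficient of $q^j$ with $\sigma_A(j)$, respectively $\sigma_A(j)/j$) that the paper leaves to the reader.
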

\begin{proof}
    If we denote the left hand side of the equality $(\ref{def:polynomization})$ by $F(q,x)$ and compare the appropriate coefficients standing next to $q^n$ of both 
    \begin{align*}
    \frac{\partial F(q,x)}{\partial q}=
    \frac{\partial \log{(F(q,x))}}{\partial q}F(q,x)
\end{align*}
and
\begin{align*}
    \frac{\partial F(q,x)}{\partial x}=
    \frac{\partial \log{(F(q,x))}}{\partial x}F(q,x),
\end{align*}
we obtain the required property.
\end{proof}

Now, it is not difficult to notice that if $1\in A$, then $f_{A,n}(x)$ is a polynomial of degree $n$. Otherwise, we have that $f_{A,1}(x)=0$. In order to omit such a confusion, we assume that henceforth $1\in A$ and $1$ occurs exactly one time in $A$. In addition, that assumption asserts that $\gcd A=1$ which is a necessary and sufficient condition to have the
Bessenrodt--Ono type inequality for $p_A(n)$ when $A$ is a finite set or multiset, see \cite{KG2}. Next, we introduce an additional notation, namely,
\begin{align}
    \Delta_{A,n}(x):=f_{A,n+1}(x)-f_{A,n}(x).
\end{align}
The following characterization follows from the above discussion.

\begin{thm}\label{thm1.1}
    Let $n\in\mathbb{N}$ and $x\in\mathbb{R}$ with $x\geq1$. Then 
    \begin{equation*}
        f_{A,n}(x)\leq f_{A,n+1}(x) \hspace{0.5cm} \text{and} \hspace{0.5cm} 1\leq f_{A,n}'(x)<f_{A,n+1}'(x).
    \end{equation*}
    For $x>1$, we additionally have that   
    \begin{equation*}
        f_{A,n}(x)<f_{A,n+1}(x).
    \end{equation*}
\end{thm}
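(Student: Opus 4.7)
My plan is to prove the three assertions of Theorem~\ref{thm1.1} in sequence. First I would establish the polynomial monotonicity $f_{A,n+1}(x) \geq f_{A,n}(x)$, strict when $x>1$. The bound $f_{A,n}(x) \geq 1$ then follows from $f_{A,0}(x)=1$, and this in turn feeds both derivative statements through the recurrence~(\ref{2}).

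For the monotonicity of $\{f_{A,n}(x)\}_n$, the hypothesis $1\in A$ with $\mu(1)=1$ lets one factor $(1-q)^{-x}$ out of~(\ref{def:polynomization}) and multiply by $(1-q)$, yielding
\[
\sum_{n=0}^{\infty} \Delta_{A,n-1}(x)\,q^n \;=\; (1-q)^{1-x}\prod_{a\in A\setminus\{1\}}\left(\frac{1}{1-q^a}\right)^{x}
\]
with the convention $f_{A,-1}(x):=0$. Expanding each factor as a generalized binomial series, one has $(1-q^a)^{-x}=\sum_{k\geq 0}\binom{x+k-1}{k}q^{ak}$ and $(1-q)^{1-x}=\sum_{k\geq 0}\binom{x+k-2}{k}q^{k}$, where the coefficients $\binom{x+k-1}{k}=x(x+1)\cdots(x+k-1)/k!$ and $\binom{x+k-2}{k}=(x-1)x\cdots(x+k-2)/k!$ are polynomials in $x$ whose numerator factors are all $\geq 0$ on $x\geq 1$ (and strictly positive on $x>1$). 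Consequently every coefficient of the right-hand side, as a polynomial in $x$, is non-negative on $[1,\infty)$; reading off the coefficient of $q^{n+1}$ yields $\Delta_{A,n}(x)\geq 0$. For $x>1$ the single contribution obtained by pairing $\binom{x+n-1}{n+1}q^{n+1}$ from $(1-q)^{1-x}$ with the constant $1$ from the remaining product is already strictly positive, hence $\Delta_{A,n}(x)>0$.

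Applying this monotonicity iteratively from $f_{A,0}(x)=1$ gives $f_{A,n}(x)\geq 1$ for every $n\geq 0$ and $x\geq 1$. Since $1\in A$ with multiplicity one, $\sigma_A(1)=1$, so (\ref{2}) implies
\[
f_{A,n}'(x) \;=\; f_{A,n-1}(x) + \sum_{j=2}^{n}\frac{\sigma_A(j)}{j}\,f_{A,n-j}(x) \;\geq\; f_{A,n-1}(x) \;\geq\; 1.
\]
For the strict increase $f_{A,n+1}'(x) > f_{A,n}'(x)$, aligning indices in~(\ref{2}) and telescoping yields
\[
f_{A,n+1}'(x) - f_{A,n}'(x) \;=\; \frac{\sigma_A(n+1)}{n+1} + \sum_{j=1}^{n}\frac{\sigma_A(j)}{j}\,\Delta_{A,n-j}(x) \;\geq\; \frac{1}{n+1} \;>\; 0,
\]
using $\sigma_A(n+1)\geq 1$ (since $1\mid n+1$ and $1\in A$) together with the non-negativity of the $\Delta_{A,n-j}(x)$ proven above.

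The principal obstacle is the very first step. For integer values $x=k$ one has the obvious combinatorial bijection ``adjoin a part~$1$'' showing $p_{A,-k}(n+1)\geq p_{A,-k}(n)$, but Theorem~\ref{thm1.1} demands the polynomial inequality to hold throughout the half-line $x\geq 1$. The generating function factorization above handles this uniformly, producing $\Delta_{A,n}(x)$ as a sum of rising-factorial polynomials whose non-negativity on $[1,\infty)$ is visible termwise, and simultaneously isolating one summand that is strictly positive precisely when $x>1$.
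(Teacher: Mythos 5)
Your proof is correct, but the key step is handled quite differently from the paper. For the monotonicity $f_{A,n+1}(x)\geq f_{A,n}(x)$, the paper argues by induction on $n$: it uses the recurrence (\ref{2}) together with the induction hypothesis to show $\Delta_{A,n}'(x)>0$ on $x\geq 1$, and then anchors the inequality at $x=1$ via the combinatorial identity $p_A(n+1)=p_A(n)+p_{A\setminus\{1\}}(n+1)$, concluding $\Delta_{A,n}(x)\geq 0$ for $x\geq 1$ and $>0$ for $x>1$. You instead prove the monotonicity directly and non-inductively, by writing $\sum_n \Delta_{A,n-1}(x)q^n=(1-q)^{1-x}\prod_{a\in A\setminus\{1\}}(1-q^a)^{-x}$ and observing that every coefficient is a finite sum of products of rising-factorial polynomials, each term non-negative on $[1,\infty)$, with the term $\binom{x+n-1}{n+1}\cdot 1$ supplying strict positivity for $x>1$; this is clean, makes the sign structure of $\Delta_{A,n}(x)$ completely transparent, and sidesteps both the induction and the combinatorial anchor at $x=1$. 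Your treatment of the derivative statements (the lower bound $f_{A,n}'(x)\geq f_{A,n-1}(x)\geq 1$ via $\sigma_A(1)=1$, and the telescoped difference $f_{A,n+1}'(x)-f_{A,n}'(x)=\tfrac{\sigma_A(n+1)}{n+1}+\sum_{j=1}^{n}\tfrac{\sigma_A(j)}{j}\Delta_{A,n-j}(x)\geq\tfrac{1}{n+1}$) is essentially the same computation the paper performs inside its induction, but stated more carefully: the paper's displayed chain ends with a strict inequality where an equality from (\ref{2}) is meant, and your version avoids that slip. The only cosmetic caveat is that your argument silently uses that coefficients of the (possibly infinite) product are determined by the finitely many $a\leq n+1$, which is standard for formal power series and harmless.
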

\begin{proof}
    At first, we show that $f_{A,n}(x)\leq f_{A,n+1}(x)$. The proof is by induction on $n$ under the assumption that $x\geq1$. Let us notice that $f_{A,1}(x)=x$ and $f_{A,2}(x)=x(x+(1+2s_2))/2$, where $s_2$ is the number of $2$s
appearing in the multiset $A$. Thus, we get that 
    \begin{align*}
        \Delta_{A,1}(x)\geq\frac{x}{2}(x-1)\geq0
    \end{align*}
    for every $x\geq1$. Now, we assume that the statement is valid for all $x\geq1$ and $1\leq m\leq n-1$, and check its correctness for $m=n$. The identity (\ref{2}) asserts that
    \begin{equation*}
        f_{A,n+1}'(x)>\sum_{j=1}^n\frac{\sigma_A(j)}{j}f_{A,n+1-j}(x)\geq\sum_{j=1}^n\frac{\sigma_A(j)}{j}f_{A,n-j}(x)>f_{A,n}'(x).
    \end{equation*}
    Thus, we have that the inequality $f_{A,n+1}'(x)>f_{A,n}'(x)$ is satisfied for every $x\geq1$. One can also notice that 
    \begin{equation*}
        f_{A,n+1}(1)=p_A(n+1)=p_A(n)+p_{A\setminus\{1\}}(n+1)\geq f_{A,n}(1).
    \end{equation*}
    Hence, the function $\Delta_{A,n}'(x)$ is positive for every $x\geq1$, and the inequality \linebreak $f_{A,n+1}(x)\geq f_{A,n}(x)$ holds for all such $x$, as required. The second part of the statement follows directly from the above discussion.
\end{proof}

Now, we are ready to investigate the Bessenrodt--Ono
inequality for the polynomials $f_{A,n}(x)$. We divide our consideration into a few section depending on the structure of $A$.

\section{The Bessenrodt--Ono inequality for an arbitrary set $A$ with $1\in A$}

In this section, we assume that $A\subseteq\mathbb{N}$ and $A$ is a
set---meaning that each number $j$ might occur at most once in $A$. Moreover, we also require that $1\in A$. At the beginning, let us consider a very useful example when $A$ is a singleton.

\begin{lm}\label{Lemma0}
    For the singleton $A_I=\{1\}$, the equality
    \begin{equation*}
    f_{A_I,n}(x)=\frac{1}{n!}\prod_{i=0}^{n-1}(x+i)=\binom{x+n-1}{n}
\end{equation*}
holds for every $n\in\mathbb{Z}_{\geq0}$.
\end{lm}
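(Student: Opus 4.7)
The plan is to read off $f_{A_I,n}(x)$ directly from the defining series (\ref{def:polynomization}). When $A_I = \{1\}$, the product on the right-hand side collapses to $(1-q)^{-x}$, so the task reduces to identifying the coefficient of $q^n$ in the standard negative binomial expansion.

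Concretely, I would apply the generalized binomial theorem to obtain
$$(1-q)^{-x} = \sum_{n=0}^\infty \binom{-x}{n}(-1)^n q^n,$$
and then rewrite each coefficient as a rising product. The identity
$$\binom{-x}{n}(-1)^n = \frac{(-x)(-x-1)\cdots(-x-n+1)}{n!}(-1)^n = \frac{x(x+1)\cdots(x+n-1)}{n!}$$
yields precisely $\tfrac{1}{n!}\prod_{i=0}^{n-1}(x+i)$, which in turn equals $\binom{x+n-1}{n}$ by the usual conversion between rising factorials and binomial coefficients.

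As a cross-check, and to stay inside the framework of the preceding proposition, I would also verify the formula by induction on $n$ using the recurrence (\ref{1}). For $A_I = \{1\}$ the only divisor of $j$ lying in $A_I$ is $1$, so $\sigma_{A_I}(j) = 1$ for every $j\geq 1$ and (\ref{1}) becomes
$$f_{A_I,n}(x) = \frac{x}{n}\sum_{k=0}^{n-1} f_{A_I,k}(x).$$
The base case $f_{A_I,0}(x)=1=\binom{x-1}{0}$ is immediate. Assuming inductively that $f_{A_I,k}(x) = \binom{x+k-1}{k}$ for all $k<n$, the sum collapses by the hockey-stick identity $\sum_{k=0}^{n-1}\binom{x+k-1}{k} = \binom{x+n-1}{n-1}$, after which the prefactor $x/n$ converts $\binom{x+n-1}{n-1}$ into $\binom{x+n-1}{n}$, as desired.

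The only point requiring mild care is that the hockey-stick identity is classically stated for integer $x$, whereas here $x$ is a formal variable; this is resolved at once by the polynomial identity principle, since both sides are polynomials of degree $n-1$ in $x$ that agree on every positive integer. Beyond this bookkeeping, the lemma is essentially a one-line consequence of the negative binomial series, so I do not expect any substantive obstacle.
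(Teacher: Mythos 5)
Your proposal is correct, and in fact it contains two complete arguments. The paper's own proof is a one-line remark that one should ``systematically utilize the formula (\ref{1})''; your second, cross-check argument (setting $\sigma_{A_I}(j)=1$ for all $j$, so that the recurrence becomes $f_{A_I,n}(x)=\frac{x}{n}\sum_{k=0}^{n-1}f_{A_I,k}(x)$, then inducting via the hockey-stick identity and absorbing the prefactor $x/n$) is precisely a fleshed-out version of that, and your remark that the hockey-stick identity extends from integer $x$ to a formal variable by comparing polynomials of degree $n-1$ is the right way to dispose of the only subtlety there. Your primary route --- collapsing the product in (\ref{def:polynomization}) to $(1-q)^{-x}$ and extracting the coefficient of $q^n$ from the negative binomial series $\binom{-x}{n}(-1)^n=\frac{1}{n!}\prod_{i=0}^{n-1}(x+i)$ --- is a genuinely more direct derivation that bypasses the recurrence entirely; its only cost is that one must interpret $(1-q)^{-x}$ for non-integer $x$ (e.g.\ as $\exp(-x\log(1-q))$ in the ring of formal power series over $\mathbb{Q}[x]$, or by noting that both sides are polynomial in $x$ and agree for all positive integers $x$), whereas the recurrence approach buys consistency with how the polynomials $f_{A,n}(x)$ are actually pinned down in Section 2. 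Either argument alone would suffice; no gaps.
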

\begin{proof}
    To prove the required property, it is enough to systematically utilize the formula $(\ref{1})$.
\end{proof}
It is worth pointing out that the above polynomials are naturally connected with a well-known combinatorial sequence. The unsigned Stirling number of the first kind $\stirlingone{k}{l}$ is the number of permutations of $k$ elements with exactly $l$ cycles. Equivalently, it may be defined as the appropriate coefficient of the rising factorial (Pochhammer function), namely
\begin{align*}
    x^{\overline{n}}:=x(x+1)\cdots(x+n-1)=\sum_{i=0}^{n}\stirlingone{n}{i}x^i.
\end{align*}
For more information about Stirling numbers, we refer the reader to \cite[Chapter~6]{GKP}. Therefore, it is clear that the equality
\begin{align*}
    f_{A_I,n}(x)=\frac{1}{n!}\sum_{i=0}^{n}\stirlingone{n}{i}x^i
\end{align*}
is valid for all $n\in\mathbb{Z}_{\geq0}$.

Furthermore, let us notice that for any other set (or multiset) $B\subseteq\mathbb{N}$ with $1\in B$, we have that
\begin{equation*}
    f_{A_I,n}(x)\leq f_{B,n}(x)
\end{equation*}
for all $n\geq0$ and $x\geq0$---that
is a direct consequence of the equality (\ref{1}). Moreover, we may easily extend Lemma \ref{Lemma0} to any finite multiset of ones.
\begin{cor}
    For an arbitrary positive integer $m$ and the multiset $A_{II}=\{1,\ldots,1\}$ with $\mu(1)=m$, the equality
    \begin{equation*}
    f_{A_{II},n}(x)=f_{A_I,n}(mx)
\end{equation*}
is satisfied for every $n\in\mathbb{Z}_{\geq0}$.
\end{cor}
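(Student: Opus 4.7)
The plan is to argue directly at the level of generating functions, which reduces the identity to observing that exponents multiply under the substitution $x \mapsto mx$. By the defining equation~(\ref{def:polynomization}), the generating series for $\{f_{A_{II},n}(x)\}_n$ is
\begin{equation*}
\sum_{n=0}^{\infty} f_{A_{II},n}(x)\,q^n = \prod_{a \in A_{II}} \left(\frac{1}{1-q^a}\right)^x = \left(\frac{1}{1-q}\right)^{mx},
\end{equation*}
since the multiset $A_{II}$ consists of the single element $1$ taken with multiplicity $m$, so the product over $a \in A_{II}$ contributes $m$ identical factors of $1/(1-q)^x$.

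On the other hand, the generating series for $\{f_{A_I,n}(y)\}_n$ is $(1-q)^{-y}$, so specializing $y = mx$ yields precisely $(1-q)^{-mx}$. The two formal power series in $q$ thus coincide coefficient by coefficient, which gives $f_{A_{II},n}(x) = f_{A_I,n}(mx)$ for every $n \in \mathbb{Z}_{\geq 0}$.

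Since there are no real obstacles here—it is a one-line comparison of generating functions—the only thing worth double-checking is the convention that a factor in $\prod_{a \in A}$ is taken once per occurrence in the multiset, which the authors already make explicit. As a sanity check, one may alternatively run an induction on $n$ using the recurrence (\ref{1}): noting that $\sigma_{A_{II}}(j) = m$ and $\sigma_{A_I}(j) = 1$ for every $j \geq 1$, the recurrence for $f_{A_{II},n}(x)$ becomes
\begin{equation*}
f_{A_{II},n}(x) = \frac{mx}{n}\sum_{j=1}^{n} f_{A_{II},n-j}(x),
\end{equation*}
which, by the inductive hypothesis $f_{A_{II},n-j}(x) = f_{A_I,n-j}(mx)$, matches the recurrence satisfied by $f_{A_I,n}(mx)$. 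Either route completes the proof.
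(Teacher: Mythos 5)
Your proof is correct, and it is essentially the argument the paper has in mind (the paper omits the proof as immediate): the product over the multiset $A_{II}$ collapses to $(1-q)^{-mx}$, which is the $A_I$ generating series evaluated at $mx$. The supplementary induction via the recurrence~(\ref{1}) with $\sigma_{A_{II}}(j)=m$ is also valid and adds a useful cross-check, but is not needed.
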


\iffalse
In order to make our text more transparent, we need to introduce some additional notation. Let us recall that for $A=\mathbb{N}$ and $k\in\mathbb{N}$, the equality
\begin{equation*}
    f_{\mathbb{N},n}(k)=p_{-k}(n)
\end{equation*}
holds for every $n\geq0$, where the function $p_{-k}(i)$ is the number of $k$-color partitions of $i$. For a general $A$, on the other hand, we get that
\begin{equation*}
    f_{A,n}(k)=p_{A,-k}(n)
\end{equation*}
is valid for all $k\in\mathbb{N}$ and $n\geq0$, where $p_{A,-k}(i)$ corresponds to the $k$-colored restricted partition function.
\fi

Before we go to the main part of this section, let us take a look on a concrete family of polynomials $f_{A,n}(x)$. For instance, if we set $A=\{1,2,3,4,5\}$, then one can easily calculate the corresponding polynomials $f_{A,n}(x)$ for small values of $n$. We present all the cases for $1\leq n\leq7$ in Table 1.
\begin{center}
\begin{table}
\begin{tabular}{ |c|l| } 
\hline
$n$ & $f_{A,n}(x)$\\
 \hline\hline 
 $1$ & $x$ \\ 
 $2$ & $\frac{1}{2}x(x+3)$ \\ 
 $3$ & $\frac{1}{6}x(x+1)(x+8)$  \\ 
 $4$ & $\frac{1}{24} x (x+1) (x+3) (x+14)$  \\ 
 $5$ & $\frac{1}{120} x (x+3) (x+6) \left(x^2+21 x+8\right)$ \\ 
 $6$ & $\frac{1}{720} x (x+2) (x+5) \left(x^3+38 x^2+289 x+72\right)$  \\ 
 $7$ & $\frac{1}{5040}x \left(x^6+63 x^5+1225 x^4+9345 x^3+28294 x^2+25872 x+720\right)$ \\
 \hline
\end{tabular}
\caption{The polynomials $f_{A,n}(x)$ for $A=\{1,2,3,4,5\}$ and $1\leq n \leq7$.}
\end{table}
\end{center}

In order to gain some intuition related to the behaviour of the polynomials $f_{A,a}(x)f_{A,b}(x)-f_{A,a+b}(x)$, one can determine their roots. Figure 1 shows all their complex roots of $1\leq a,b\leq 10$.
\begin{center}
\begin{figure}[!htb]
   \begin{minipage}{1\textwidth}
     \centering
     \includegraphics[width=.9\linewidth]{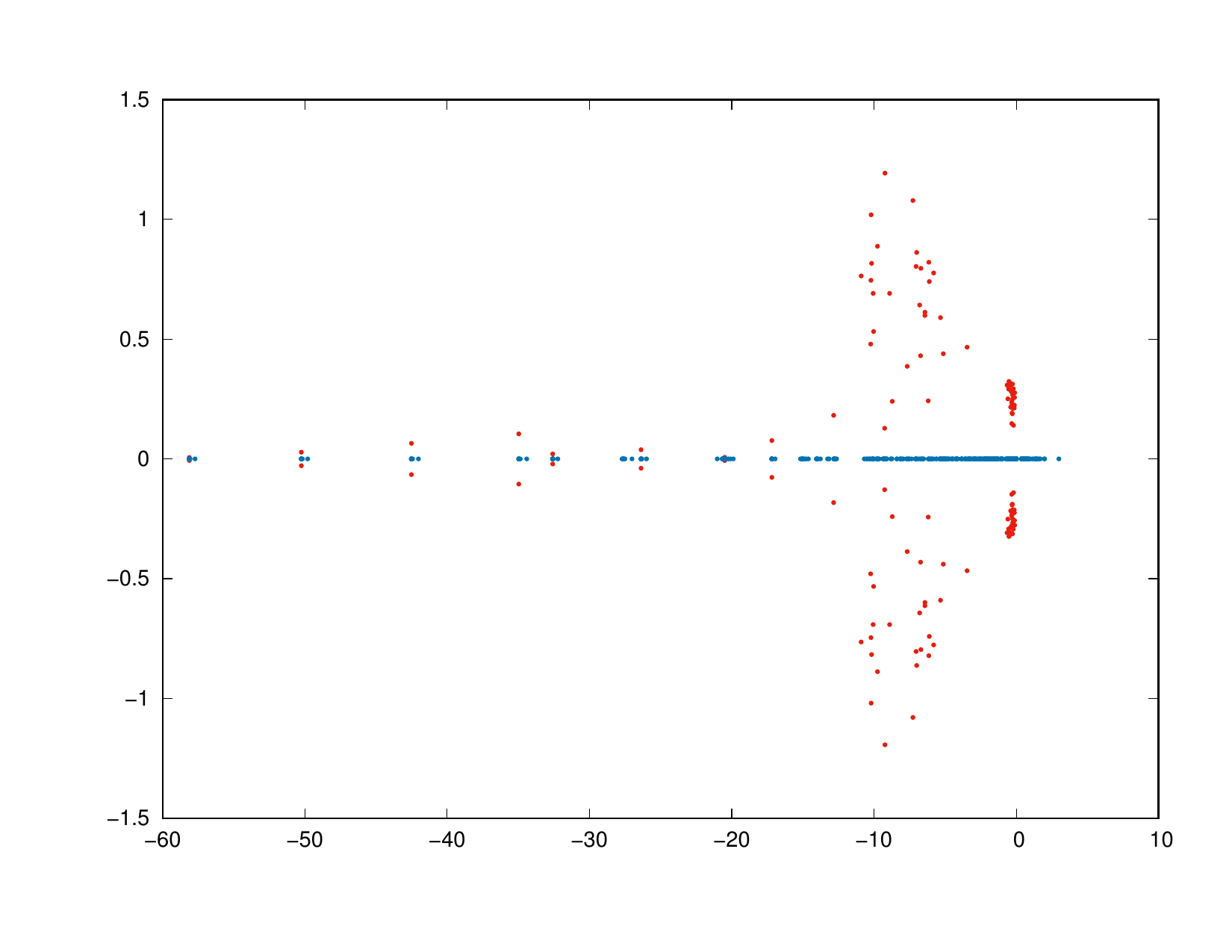}
     \caption{Complex roots of the polynomials $f_{A,a}(x)f_{A,b}(x)-f_{A,a+b}(x)$ for $A=\{1,2,3,4,5\}$ and $1\leq a,b \leq10$.}
   \end{minipage}\hfill
\end{figure}
\end{center}

Now, our aim is to find a Bessenrodt--Ono
type inequality for the $3$-colored $A$-partition function---that
is $p_{A,-3}(n)=f_{A,n}(3)$. That result will play a central role in the proof of the main theorem of this section. To deal with the task, we show the reasoning by induction on $a+b$. Actually, we combine methods recently utilized by Heim,
Neuhauser, and Tr\"oger in \cite{HN3, HN6}.

\begin{thm}\label{thm2.1}
    For every set $A\subseteq\mathbb{N}$ with $1\in A$, the inequality
    \begin{equation*}
        f_{A,a}(3)f_{A,b}(3)>f_{A,a+b}(3)
    \end{equation*}
    holds for all pairs $(a,b)\in\mathbb{N}^2$ except for $(1,1)$. In that case, we have that
    \begin{equation*}
        f_{A,1}(3)f_{A,1}(3)\geq f_{A,2}(3)
    \end{equation*}
    with
equality whenever $2\in A$.
\end{thm}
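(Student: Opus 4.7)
My strategy is strong induction on $N := a+b$, based on the recurrence (\ref{1}) at $x=3$.

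For the base cases $N\in\{2,3\}$, direct computation from (\ref{1}) gives $f_{A,1}(3)=3$, $f_{A,2}(3)=6+3s_2$, and $f_{A,3}(3)=10+9s_2+3s_3$, where $s_j:=\mu(j)\in\{0,1\}$ since $A$ is a set. Thus $f_{A,1}(3)^2-f_{A,2}(3)=3-3s_2\geq 0$ with equality iff $2\in A$, and $f_{A,1}(3)f_{A,2}(3)-f_{A,3}(3)=8-3s_3>0$, settling the pairs $(1,1)$ and $(1,2)$.

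For the inductive step ($N\geq 4$), I apply (\ref{1}) at $x=3$ to each of $(a+b)f_{A,a+b}(3)$, $af_{A,a}(3)$, $bf_{A,b}(3)$, and split $\sum_{j=1}^{a+b}\sigma_A(j)f_{A,a+b-j}(3)$ at $j=a$ (reindexing the tail by $k=j-a$), arriving at the key decomposition
\begin{align*}
\tfrac{a+b}{3}\bigl(f_{A,a}(3)f_{A,b}(3)-f_{A,a+b}(3)\bigr)
&= \sum_{j=1}^{a}\sigma_A(j)\bigl(f_{A,a-j}(3)f_{A,b}(3)-f_{A,a+b-j}(3)\bigr) \\
&\quad + \sum_{k=1}^{b}f_{A,b-k}(3)\bigl(\sigma_A(k)f_{A,a}(3)-\sigma_A(k+a)\bigr).
\end{align*}

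When $a=1$ (and $b\geq 3$) the first sum is identically $0$, and a further rearrangement using the recurrences for $bf_{A,b}(3)$ and $(b+1)f_{A,b+1}(3)$ converts $3f_{A,b}(3)>f_{A,b+1}(3)$ into
\[
3f_{A,b-1}(3)-\sigma_A(b+1)>\sum_{j=2}^{b}\sigma_A(j)\bigl(f_{A,b+1-j}(3)-3f_{A,b-j}(3)\bigr).
\]
The right-hand side is non-positive by the inductive hypothesis on $(1,b-j)$ for $b-j\geq 2$, the direct estimate $f_{A,2}(3)\leq 3f_{A,1}(3)$ at $j=b-1$, and vanishing at $j=b$. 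The left-hand side is positive: Lemma \ref{Lemma0} and Theorem \ref{thm1.1} give $f_{A,b-1}(3)\geq\binom{b+1}{2}$, while the trivial bound $\sigma_A(b+1)\leq\sigma(b+1)\leq\binom{b+2}{2}$ yields $3f_{A,b-1}(3)-\sigma_A(b+1)\geq(b+1)(b-1)>0$ for $b\geq 2$.

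When $a,b\geq 2$, every term of the first sum with $1\leq j\leq a-1$ is strictly positive by the inductive hypothesis (since $b\geq 2$ prevents $(a-j,b)=(1,1)$), while the $j=a$ term vanishes; in particular, the first sum is bounded below by its $j=1$ contribution $f_{A,a-1}(3)f_{A,b}(3)-f_{A,a+b-1}(3)>0$. The main obstacle is the second sum, whose brackets $\sigma_A(k)f_{A,a}(3)-\sigma_A(k+a)$ can be negative for $k$ such that $k+a$ is rich in $A$-divisors, so a termwise bound does not suffice. I plan to control it by a refined aggregate estimate combining the quadratic lower bound $f_{A,a}(3)\geq\binom{a+2}{2}$ from Lemma \ref{Lemma0} with the convolution identity $\sum_{k=1}^{b}\sigma_A(k)f_{A,b-k}(3)=\tfrac{b}{3}f_{A,b}(3)$ and the strictly positive mass of the first sum, which together dominate any negative contribution.
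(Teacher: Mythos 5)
Your setup is sound and is essentially the paper's: induction on $a+b$ via the recurrence (\ref{1}) at $x=3$, with exactly the decomposition
\begin{align*}
\tfrac{a+b}{3}\bigl(f_{A,a}(3)f_{A,b}(3)-f_{A,a+b}(3)\bigr)
= \sum_{j=1}^{a}\sigma_A(j)\bigl(f_{A,a-j}(3)f_{A,b}(3)-f_{A,a+b-j}(3)\bigr)
+ \sum_{k=1}^{b}f_{A,b-k}(3)\bigl(\sigma_A(k)f_{A,a}(3)-\sigma_A(k+a)\bigr)
\end{align*}
that the paper uses. Your base cases are correct, and your self-contained treatment of the pairs $(1,b)$ is a nice touch (the paper instead absorbs these into the general step under the normalization $a\geq b$). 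The problem is that for $a,b\geq 2$ you stop exactly where the real work begins: the last paragraph announces a ``refined aggregate estimate'' for the second sum but never carries it out, so the inductive step is not proved.

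Moreover, the plan as described does not close the gap. The positivity of the first sum supplied by the induction hypothesis is only qualitative --- you get no quantitative ``mass'' from it unless you strengthen the inductive claim --- and the convolution identity $\sum_{k=1}^{b}\sigma_A(k)f_{A,b-k}(3)=\tfrac{b}{3}f_{A,b}(3)$ merely undoes the decomposition. What actually works (and is what the paper does) is the termwise bound you dismissed: with $\sigma_A(k)\geq 1$, $\sigma_A(a+k)\leq (a+k)(1+\ln(a+k))\leq 2a(1+\ln(2a))$ for $k\leq b\leq a$, and $f_{A,a}(3)\geq\binom{a+2}{2}$ from Lemma \ref{Lemma0}, each bracket is nonnegative precisely when $(a+1)(a+2)\geq 4a(1+\ln(2a))$, which holds for $a\geq 15$ but fails for $a=14$. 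So the termwise bound \emph{does} suffice --- but only for $a\geq 15$ --- and the finitely many remaining pairs $2\leq b\leq a\leq 14$ cannot be waved away: for these the paper performs an exhaustive verification (Lemma \ref{Lemma1}) over all subsets $A\subseteq\{1,\dots,a+b\}$ with $1\in A$, using Bell's quasi-polynomial formula, together with auxiliary bounds for sets containing small elements. Your proposal contains neither the explicit threshold computation nor any substitute for this finite check, so as it stands the theorem is proved only for $\min(a,b)=1$ and, implicitly, for large $a$.
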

\begin{proof}
    At first, let us notice that for each $n\geq0$, we have $f_{A,n}(3)f_{A,0}(3)=f_{A,n}(3)$. We prove the statement by induction on $a+b$. Let us assume that $a+b=2$. Since
    \begin{align*}
        f_{A,2}(3)=\begin{cases}
9,  & \text{if } 2\in A,\\
6, & \text{if } 2\not\in A,
\end{cases}
    \end{align*}
    it is straightforward to see that
    \begin{align*}
        f_{A,1}(3)f_{A,1}(3)=9\geq f_{A,2}(3).
    \end{align*}
    On the other hand, if $a+b=3$, then
    \begin{align*}
        f_{A,3}(3)=\begin{cases}
22,  & \text{if } 2,3\in A,\\
19, & \text{if } 2\in A \text{ and } 3\not\in A,\\
13, & \text{if } 2\not\in A \text{ and } 3\in A,\\
10, & \text{if } 2,3\not\in A,
\end{cases}
    \end{align*}
    and the inequality 
    \begin{equation*}
        f_{A,1}(3)f_{A,2}(3)>f_{A,3}(3)
    \end{equation*}
    follows. Next, we check the validity of the statement for all pairs $(a,b)$ such that $a,b\leq14$ and possible choices for the set $A$. The procedure which allows us to deal with this task is specifically described in Lemma \ref{Lemma1}. Now, we suppose that the statement is valid for all pairs $(a,b)\in\mathbb{N}^2$ with $3\leq a+b\leq n$, and check whether it is also true for $a+b=n+1.$ Without loss of generality we may assume that $a\geq b\geq1$ and $a\geq15$. Employing the formula (\ref{1}) to the difference $f_{A,a}(3)f_{A,b}(3)-f_{A,a+b}(3)$, we get
    \begin{align*}
        &\frac{3}{a}\sum_{j=1}^a\sigma_A(j)f_{A,a-j}(3)f_{A,b}(3)-\frac{3}{a+b}\sum_{i=1}^{a+b}\sigma_A(i)f_{A,a+b-i}(3)\\
        >&\frac{3}{a}\sum_{j=1}^a\sigma_A(j)f_{A,a-j}(3)f_{A,b}(3)-\frac{3}{a+b}\sum_{i=1}^{a}\sigma_A(i)f_{A,a-i}(3)f_{A,b}(3)\\
        &\phantom{\frac{3}{a}\sum_{j=1}^a\sigma_A(j)f_{A,a-j}(3)f_{A,b}(3)}-\frac{3}{a+b}\sum_{i=1}^{b}\sigma_A(a+i)f_{A,b-i}(3)\\
        =&\frac{3}{a(a+b)}\left(b\sum_{j=1}^a\sigma_A(j)f_{A,a-j}(3)f_{A,b}(3)-a\sum_{i=1}^b\sigma_A(a+i)f_{A,b-i}(3)\right)\\
    \end{align*}
    where the inequality follows directly from the induction hypothesis
which is applied to $f_{A,a+b-i}(3)$ for each $i$ between $1$ and $a$. Thus, it is enough to prove that the expression in the bracket is non-negative. To make the text more transparent, let us set
    \begin{align*}
        \kappa_a:=\sum_{j=1}^a\sigma_A(j)f_{A,a-j}(3).
    \end{align*}
    We obtain that
\begin{align*}
&b\kappa_af_{A,b}(3)-a\sum_{i=1}^b\sigma_A(a+i)f_{A,b-i}(3)\\
    =&3\kappa_a\sum_{i=1}^b\sigma_A(i)f_{A,b-i}(3)-a\sum_{i=1}^b\sigma_A(a+i)f_{A,b-i}(3)\\
    =&\sum_{i=1}^b\left(3\kappa_a\sigma_A(i)-a\sigma_A(a+i)\right)f_{A,b-i}(3).
\end{align*}
Since $1\leq \sigma_A(n)\leq\sigma(n)\leq n(1+\ln{n})$ for every $n\geq1$ and $a\geq b$, we further have that
\begin{align*}
    3\kappa_a-2a^2(1+\ln{(2a)})&\geq3\sum_{j=0}^{a-1}f_{A,j}(3)-2a^2(1+\ln{(2a)})\\
    &\geq3\binom{a+2}{3}-2a^2(1+\ln{(2a)})\label{inequality}\tag{$\star$},
\end{align*}
where the last inequality is a consequence of Lemma \ref{Lemma0}. Hence, it is enough to show that the function 
    \begin{align*}
        \Psi(x):=(x+1)(x+2)-4x(1+\ln{(2x)})
    \end{align*}
    is non-negative for all $x\geq15$; and it is not a difficult problem to solve. The proof is complete by the law of induction.
\end{proof}

For the sake of clarity, let us complete the proof by dealing with the remaining cases for $1\leq a,b\leq14$. 

\begin{lm}\label{Lemma1}
    For any set $A\subseteq\mathbb{N}$ such that $1\in A$, the inequality
    \begin{equation*}
        f_{A,a}(3)f_{A,b}(3)>f_{A,a+b}(3)
    \end{equation*}
    holds for all such pairs $a,b\in\mathbb{N}$ that $a,b\leq14$ and $a+b>2$.
\end{lm}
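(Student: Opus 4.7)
The first step is to observe that $f_{A,n}(3)$ depends on $A$ only through $A\cap\{1,\ldots,n\}$: the recurrence (\ref{1}) involves $\sigma_A(i)$ for $i\leq n$, and only divisors of $i$ (all bounded by $n$) contribute. Hence for $a,b\leq 14$ (so $a+b\leq 28$), the lemma concerns only the finite data $A\cap\{1,\ldots,28\}$ with $1\in A$ fixed.

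My plan is to induct on $N:=a+b$, from $N=3$ (the base case, already settled in the proof of Theorem~\ref{thm2.1}) up to $N=28$. For the inductive step, assume $a\geq b$ and proceed exactly as in that theorem: apply (\ref{1}) to $f_{A,a+b}(3)$, split the sum at index $i=a$, and use the inductive hypothesis on the factors $f_{A,(a-i)+b}(3)$ for $i<a$. After the same algebraic manipulations performed in the proof of Theorem~\ref{thm2.1}, the claim reduces to showing
\[
\sum_{j=1}^{b}\bigl[f_{A,a}(3)\,\sigma_A(j)-\sigma_A(a+j)\bigr]\,f_{A,b-j}(3)\geq 0,
\]
handling separately the edge case $(a-i,b)=(1,1)$ (i.e., $i=a-1$ and $b=1$), where the inductive hypothesis yields equality precisely when $2\in A$.

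For $a\leq 14$ the lower bound $f_{A,a}(3)\geq\binom{a+2}{2}$ that drives Theorem~\ref{thm2.1} is no longer large enough to force each summand above to be non-negative through a crude estimate on $\sigma_A(a+j)$. I would close the remaining cases by explicit computation: for each fixed pair $(a,b)$ in the range, the three quantities $f_{A,a}(3)$, $f_{A,b}(3)$, $f_{A,a+b}(3)$ are polynomials in the indicator variables $\mathbf{1}_{j\in A}$ for $2\leq j\leq a+b$, obtained by reading off coefficients from $(1-q)^{-3}\prod_{j\in A\cap[2,\,a+b]}(1-q^j)^{-3}$. The inequality then reduces to a finite collection of explicit numerical checks over those configurations.

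The main obstacle is the size of the configuration space when $a+b$ approaches $28$, where naively there are up to $2^{27}$ subsets of $\{2,\ldots,a+b\}$. I would tame this by systematic pruning: many subsets produce identical triples $(f_{A,a}(3),f_{A,b}(3),f_{A,a+b}(3))$, and any element $j>\max(a,b)$ can affect only $f_{A,a+b}(3)$, by a bounded and easily controlled amount. Combined with the inductive reduction above, which already disposes of most pairs without touching the bracketed expression at all, this leaves only a short residual list of cases, which can be verified either by hand for the smallest values of $N$ or by a brief computer-assisted check for $N$ close to $28$.
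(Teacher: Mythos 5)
Your proposal is correct and takes essentially the same route as the paper: the lemma is settled by a finite, computer-assisted verification over the subsets of $\{1,\ldots,a+b\}$ containing $1$ (elements exceeding $a+b$ being irrelevant), organized around the same inductive reduction as in Theorem~\ref{thm2.1}. The only difference is how the large cases $11\le a\le 14$ are tamed: where you propose generic pruning of the $2^{a+b-1}$ configurations, the paper uses the monotone lower bound $f_{A,n}(3)\ge f_{\{1,r\},n}(3)$ for any $r\in A\cap\{2,\ldots,7\}$, whose explicit quasi-polynomial formulas are strong enough to rerun the estimate $(\star)$ for all $a\ge 11$, leaving a direct check only over subsets of $\{1,8,9,\ldots,a+b\}$.
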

\begin{proof}
    In order to prove the lemma, we carry out appropriate computations in Wolfram Mathematica \cite{WM}. At the beginning, we assume that $a,b\leq10$, $a+b\geq3$, and consider all possible subsets $A$ of the set $\{1,2,\ldots,a+b\}$ such that $1\in A$. Observe that, in fact, it is enough to consider the aforementioned subsets. Otherwise, if $j\in A$ and $j>a+b$, then $j$ can not appear as a part of the partitions of $a$, $b$ and $a+b$. Hence, any such 
$j$ does not influence the values of $f_{A,a}(3)$, $f_{A,b}(3)$,
and $f_{A,a+b}(3)$. Considering all possible choices of $A$, and numbers $a$ and $b$ such that $a,b\leq10$, we check the correctness of the statement for every such a triple. Since in all of these cases the set $A$ is finite, it follows from Bell's theorem (see, \cite{Bell, KG2}) that the function $f_{A,n}(3)$ is a quasi-polynomial, namely, the expression of the form 
    $$f_{A,n}(3)=c_{l}(n)n^l+c_{l-1}(n)n^{l-1}+\cdots+c_{0}(n),$$
    where $l=3|A|-1$ and all of these coefficients $c_i(n)$ depend on the residue class of $n\pmod*{\lcm(A)}$. Thus, we may explicitly determine its values and directly verify the claim. 
    
    Afterwards, we slightly modify our reasoning for all pairs $a$ and $b$ such that $a\geq b\geq1$ and $11\leq a\leq14$. Mainly because the number of possible choices of the set $A$ rapidly grows with the value of $a+b$, which, on the other hand, significantly influences
the time of the calculations. Thus, the idea is to assume something more about the set $A$. For instance, let us suppose that $1,2\in A$, then $f_{A,n}(3)\geq f_{\{1,2\},n}(3)$ for each non-negative integer $n$. However, we can directly derive the formula for $f_{\{1,2\},n}(3)$ (due to Bell's theorem \cite{Bell}) which takes the form
    \begin{align*}
        f_{\{1,2\},n}(3)=\begin{cases}
    \frac{n^5}{960}+\frac{3n^4}{128}+\frac{19n^3}{96}+\frac{25 n^2}{32}+\frac{173n}{120}+1, & \text{if }  2\mid n, \\
    \frac{n^5}{960}+\frac{3n^4}{128}+\frac{19n^3}{96}+\frac{49n^2}{64}+\frac{1249n}{960}+\frac{91}{128}, & \text{if } 2\nmid n.
  \end{cases}
    \end{align*}
    Now, taking minima of the corresponding coefficients of the above polynomials, it is straightforward to see that
    \begin{align*}
        f_{\{1,2\},n}(3)\geq\frac{n^5}{960}+\frac{3n^4}{128}+\frac{19n^3}{96}+\frac{49n^2}{64}+\frac{1249n}{960}+\frac{91}{128}
    \end{align*}
    holds for every $n\geq0$. Therefore, we might replace the lower bound in (\ref{inequality}) in the proof of Theorem \ref{thm2.1} by the right hand side of the above inequality, and verify that 
    \begin{align*}
        3\sum_{n=1}^{a-1}\left(\frac{n^5}{960}+\frac{3n^4}{128}+\frac{19n^3}{96}+\frac{49n^2}{64}+\frac{1249n}{960}+\frac{91}{128}\right)\geq2a^2(1+\ln{(2a)})
    \end{align*}
    holds for every $a>4$. Actually, we might present a similar reasoning also for $f_{\{1,r\},n}(3)$, where $r\in\{3,4,5,6,7\}$ and obtain that the required inequality for $f_{\{1,r\},n}(3)$ is satisfied for all $a\geq11$ and $b\geq1$. Hence, in order to deal with the remaining cases that are those corresponding to $11\leq a\leq14$ and $b\leq a$, it is enough to repeat the above method for all possible subsets $A$ of $\{1,8,9,\ldots,a+b\}$.
\end{proof}

Before we pass to the central part of this section, let us point out some consequences of Theorem \ref{thm2.1}.

\begin{cor}\label{cor3.5}
    For any finite (or infinite) multiset $B$ such that $1\in B$ and $\mu(j)=3$ for each number $j\in B$, we have that
    \begin{align*}
        p_B(a)p_B(b)>p_B(a+b)
    \end{align*}
    is true for all positive integers $a$ and $b$ (except possibly $a=b=1$ whenever $2\in B$).
\end{cor}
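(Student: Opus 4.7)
The plan is to reduce the statement directly to Theorem \ref{thm2.1} by identifying $p_B$ with the $3$-colored $A$-partition function for a suitable ordinary set $A$. Let $A \subseteq \mathbb{N}$ denote the underlying set of $B$, i.e. the set of distinct elements appearing in $B$. By hypothesis $1 \in A$ and each $j \in A$ occurs in $B$ with multiplicity exactly three. Hence the generating function splits as
\begin{equation*}
\sum_{n=0}^\infty p_B(n) q^n = \prod_{a \in A} \frac{1}{(1-q^a)^3},
\end{equation*}
which, comparing with the defining expansion \eqref{def:polynomization}, gives $p_B(n) = f_{A,n}(3) = p_{A,-3}(n)$ for every $n \geq 0$.

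With this identification the inequality $p_B(a) p_B(b) > p_B(a+b)$ becomes
\begin{equation*}
f_{A,a}(3) \, f_{A,b}(3) > f_{A,a+b}(3),
\end{equation*}
which is exactly the content of Theorem \ref{thm2.1}, valid for the ordinary set $A$ with $1 \in A$ and for all pairs $(a,b) \in \mathbb{N}^2$ except $(1,1)$. For the exceptional case $a=b=1$, Theorem \ref{thm2.1} furnishes $f_{A,1}(3)^2 \geq f_{A,2}(3)$ with equality precisely when $2 \in A$, which translates to $p_B(1)^2 \geq p_B(2)$, with equality exactly when $2$ belongs to the underlying set of $B$, i.e. when $2 \in B$. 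This accounts for the exception stated in the corollary.

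There is essentially no obstacle: the whole proof consists of recognizing the algebraic identity between the generating function of $p_B$ and the third power specialization of the $A$-polynomization, and invoking the theorem that has just been proved. The only point requiring a line of care is the translation between the set/multiset language for $A$ and $B$: one must verify that the condition ``$2 \in A$'' in Theorem \ref{thm2.1} corresponds unambiguously to ``$2 \in B$'' here, which is immediate since $B$ and its underlying set share the same elements. Thus the corollary follows in a few lines.
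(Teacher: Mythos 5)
Your proof is correct and is exactly the argument the paper intends: the corollary is stated as an immediate consequence of Theorem \ref{thm2.1}, obtained by identifying $p_B(n)$ with $f_{A,n}(3)$ for the underlying set $A$ of $B$ via the generating function $\prod_{a\in A}(1-q^a)^{-3}$. Your handling of the exceptional case $a=b=1$ (equality precisely when $2\in B$) also matches the statement of the theorem.
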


\begin{re}\label{remark3.6}{\rm
     It should be noted that Corollary \ref{cor3.5} significantly improves Gajdzica's result \cite[Theorem 5.4]{KG2} related to the
Bessenrodt--Ono inequality for all multisets $B$ of the form $B=\{1,1,1,a_2,a_2,a_2,\ldots,a_k,a_k,a_k\}$ with $1<a_2<\cdots<a_k$. More precisely, his theorem asserts that
     \begin{align*}
         p_B(a)p_B(b)>p_B(a+b)
     \end{align*}
     holds for all $a,b\geq\frac{2}{3k}\prod_{i=1}^{3k}(1+3ik\cdot\lcm{(a_2,a_3,\ldots,a_k)})+2$. On the other hand, Corollary \ref{cor3.5} maintains that the above inequality is valid for all positive integers $a$ and $b$ (except possibly $a=b=1$ whenever $2\in B$).}
\end{re}

Now, it is time to present the proof of Theorem \ref{th:set}.
\subsection{Proof of Theorem~\ref{th:set}}
\begin{proof}
    We show the claim by induction on $a+b$. For $a+b=2$, we have that 
    \begin{align*}
        f_{A,1}(x)f_{A,1}(x)-f_{A,2}(x)\geq x^2-x/2(x+3)=x(x-3)/2,
    \end{align*}
which is a consequence of the equality
\begin{align*}
        f_{A,2}(x)=\begin{cases}
    x(x+3)/2, & \text{if }  2\in A, \\
    x(x+1)/2, & \text{if } 2\not\in A.
  \end{cases}
    \end{align*}
    For $a+b=3$, on the other hand, one can determine that
    \begin{align*}
        f_{A,3}(x)=\begin{cases}
    x(x+1)(x+8)/6, & \text{if }  2,3\in A, \\
    x\left(x^2+9 x+2\right)/6, & \text{if }  2\in A \text{ and } 3\not\in A, \\
    x\left(x^2+3 x+8\right)/6, & \text{if }  2\not\in A \text{ and } 3\in A, \\
    x(x+1)(x+2)/6, & \text{if }  2,3\not\in A.
  \end{cases}
  \end{align*}
  Hence, we get that
  \begin{align*}
      f_{A,2}(x)f_{A,1}(x)-f_{A,3}(x)=\begin{cases}
    x(x-2)(x+2)/3, & \text{if }  3\in A, \\
    x(x-1)(x+1)/3, & \text{if } 3\not\in A,
  \end{cases}
  \end{align*}
  and the claim follows for $a+b=3$. Thus, let us assume that the statement is valid for all  $3\leq a+b\leq n$, and examine whether it is also correct for $a+b=n+1$ with $a$ and $b$ such that $1\leq b\leq a$ and $a\geq2$. Theorem \ref{thm2.1} points out that the inequality $f_{A,a}(3)f_{A,b}(3)>f_{A,a+b}(3)$ is true for all positive integers $a$ and $b$ such that $a+b>2$. Now, it suffices to show that the derivative
    \begin{align*}
        \frac{d}{dx}\left(f_{A,a}(x)f_{A,b}(x)-f_{A,a+b}(x)\right)
    \end{align*}
    is positive for all $x\geq3$, because this implies that $f_{A,a}(x)f_{A,b}(x)-f_{A,a+b}(x)>0$. Since $f_{A,n}(x)f_{A,0}(x)=f_{A,n}(x)$, we have
    \begin{align*}
        &f_{A,a}'(x)f_{A,b}(x)+f_{A,a}(x)f_{A,b}'(x)-f_{A,a+b}'(x)\\
        =&\sum_{j=1}^a\frac{\sigma_A(j)}{j}f_{A,a-j}(x)f_{A,b}(x)+\sum_{j=1}^b\frac{\sigma_A(j)}{j}f_{A,a}(x)f_{A,b-j}(x)-\sum_{j=1}^{a+b}\frac{\sigma_A(j)}{j}f_{A,a+b-j}(x)\\
        >&\sum_{j=1}^a\frac{\sigma_A(j)}{j}f_{A,a-j}(x)f_{A,b}(x)+\sum_{j=1}^b\frac{\sigma_A(j)}{j}f_{A,a}(x)f_{A,b-j}(x)\\
        -&\sum_{j=1}^{a}\frac{\sigma_A(j)}{j}f_{A,a-j}(x)f_{A,b}(x)-\sum_{j=1}^b\frac{\sigma_A(a+j)}{a+j}f_{A,b-j}(x)\\
        =&\sum_{j=1}^b\left(\frac{\sigma_A(j)}{j}f_{A,a}(x)-\frac{\sigma_A(a+j)}{a+j}\right)f_{A,b-j}(x).
    \end{align*}
    Therefore, we can just show that
    \begin{align*}
        \frac{\sigma_A(j)}{j}f_{A,a}(x)-\frac{\sigma_A(a+j)}{a+j}\geq0
    \end{align*}
    for each $j$ between $1$ and $b$. Since $a\geq b$ and $1\leq\sigma_A(n)\leq\sigma(n)\leq n(1+\ln{n})$ for each $n\geq1$, it is easy to see that the inequality
    \begin{align*}
        \frac{\sigma_A(j)}{j}f_{A,a}(x)-\frac{\sigma_A(a+j)}{a+j}\geq\frac{1}{a}\binom{x+a-1}{a}-(1+\ln{(2a)})
    \end{align*}
    is valid for every such a $j$. However, the assumption that $x\geq3$ asserts that
    \begin{align*}
        \frac{1}{a}\binom{x+a-1}{a}-(1+\ln{(2a)})\geq (a+1)(a+2)/(2a)-1-\ln{(2a)}.
    \end{align*}
    Thus, it suffices to prove that the function 
    \begin{align*}
        \psi(x):=(x+1)(x+2)-2x(1+\ln{(2x)})
    \end{align*}
    is positive for all real numbers $x\geq3$, and
this is not difficult to verify.
\end{proof}

Since Theorem \ref{th:set} is a very general result, it delivers us a lot of meaningful consequences.

\begin{cor}
For the set $A_{III}=\mathbb{N}$, the inequality
    \begin{equation*}
        f_{A_{III},a}(x)f_{A_{III},b}(x)>f_{A_{III},a+b}(x)
    \end{equation*}
    is valid for all $x\geq3$ and $a,b\in\mathbb{N}$ except $a=b=1$. For $a=b=1$, we have that 
    $$f_{A_{III},1}(3)f_{A_{III},1}(3)=f_{A_{III},2}(3) \hspace{0.2cm} \text{and}\hspace{0.2cm} f_{A_{III},1}(x)f_{A_{III},1}(x)>f_{A_{III},2}(x)$$
    for all $x>3$.
\end{cor}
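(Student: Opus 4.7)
The plan is simply to verify that $A_{III} = \mathbb{N}$ satisfies all hypotheses of Theorem \ref{th:set} and to apply that theorem directly. Indeed, $\mathbb{N}$ is an ordinary set of positive integers (each element has multiplicity $1$) and $1 \in \mathbb{N}$, so no extra work is required to invoke the theorem.

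For the generic case, that is, for all $(a,b) \in \mathbb{N}^2$ with $(a,b) \neq (1,1)$, the strict inequality $f_{A_{III},a}(x) f_{A_{III},b}(x) > f_{A_{III},a+b}(x)$ for $x \geq 3$ follows immediately from Theorem \ref{th:set} applied with $A = \mathbb{N}$. For the exceptional case $a = b = 1$, I would note that $2 \in \mathbb{N}$, so the clause of Theorem \ref{th:set} stating equality at $x = 3$ whenever $2 \in A$ yields $f_{A_{III},1}(3)^2 = f_{A_{III},2}(3)$, while the clause $f_{A,1}(x)^2 > f_{A,2}(x)$ for $x > 3$ gives the desired strict inequality on $(3,\infty)$.

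As a sanity check one can compute by hand: $f_{A_{III},1}(x) = x$, and since $2 \in A_{III}$ the case analysis in the proof of Theorem \ref{th:set} gives $f_{A_{III},2}(x) = x(x+3)/2$, hence
\begin{equation*}
f_{A_{III},1}(x)^2 - f_{A_{III},2}(x) = x^2 - \frac{x(x+3)}{2} = \frac{x(x-3)}{2},
\end{equation*}
which vanishes at $x = 3$ and is strictly positive for $x > 3$.

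There is essentially no obstacle here, since the corollary is nothing more than a direct specialization of Theorem \ref{th:set}. The only point worth highlighting is that for $A_{III} = \mathbb{N}$ the element $2$ is automatically present, which explains why equality (rather than strict inequality) occurs precisely at the boundary point $(a,b,x) = (1,1,3)$.
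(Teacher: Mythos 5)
Your proposal is correct and matches the paper's treatment: the corollary is stated there as a direct specialization of Theorem \ref{th:set} to $A=\mathbb{N}$ (with the equality at $x=3$ coming from the clause for $2\in A$), and no further argument is needed. Your explicit check that $f_{A_{III},1}(x)^2-f_{A_{III},2}(x)=x(x-3)/2$ is a nice confirmation but not required.
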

One can compare the obtained result with the theorem due to Heim, Neuhauser,
and Tr\"{o}ger \cite[Theorem 1.4]{HN6}. More specifically, they proved that the inequality
    \begin{equation*}
        f_{A_{III},a}(x)f_{A_{III},b}(x)>f_{A_{III},a+b}(x) 
    \end{equation*}
    is satisfied for all $x>2$, $a,b\in\mathbb{N}$ and $a+b>2$. For $x=2$, it is true whenever $a+b>4$. Clearly, Theorem \ref{th:set} has some additional requirements on the values of $a,b$ and $x$, which is a consequence of its general character.

\begin{re}
    Let $k\geq2$ be a natural number and let $A=\mathbb{N}\setminus \{jk:j\in\mathbb{N}\}$. Theorem \ref{th:set} asserts the polynomization of the
Bessenrodt--Ono inequality for the $k$-regular partition function $p_k(n)$ for any such a $k$. Recall that Beckwith and Bessenrodt \cite[Theorem 2.1]{BB} proved the
Bessenrodt--Ono inequality for $p_k(n)$ for $2\leq k \leq6$. Hence, to resolve the issue completely, it suffices to find the analogue of their result for any $k\geq 7$ and investigate the $2$-colored $k$-regular partition function in that regard.
\end{re}

At the end of this section, let us show the general version of Corollary \ref{cor3.5}.

\begin{cor}
    Let $m>3$ be a fixed natural number, and let $B$ be an arbitrary multiset of positive integers such that $1\in B$ and $\mu(j)=m$ for every $j\in B$. Then the inequality
    \begin{equation*}
        p_B(a)p_B(b)>p_B(a+b)
    \end{equation*}
    is true for all positive integers $a$ and $b$.
\end{cor}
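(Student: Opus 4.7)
The plan is to recognise $p_B(n)$ as an evaluation of the polynomial $f_{A,n}(x)$ at $x=m$, where $A$ is the underlying set of $B$, and then invoke Theorem \ref{th:set} directly. The identity $f_{B,n}(x) = f_{A,n}(mx)$ recorded in the introduction is the essential bridge from the multiset setting to the set setting already handled.

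First I would let $A$ denote the set of distinct positive integers appearing in $B$. Since $1 \in B$ and each element of $B$ has multiplicity exactly $m$, the multiset $B$ is precisely the one built from the set $A$ by replicating each element $m$ times. Consequently, $f_{B,n}(x) = f_{A,n}(mx)$ for every $n \geq 0$, and specialising at $x=1$ gives $p_B(n) = f_{B,n}(1) = f_{A,n}(m)$. The desired inequality $p_B(a) p_B(b) > p_B(a+b)$ is therefore equivalent to
$$f_{A,a}(m)\, f_{A,b}(m) > f_{A,a+b}(m).$$

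Second, since $1 \in A$ and $m$ is an integer with $m > 3$, in particular $m \geq 4$, Theorem \ref{th:set} applies at $x=m$. For every pair $(a,b)$ with $a+b > 2$ and $(a,b) \neq (1,1)$ the theorem delivers the strict inequality at $x=m$ immediately. For the remaining case $a=b=1$, Theorem \ref{th:set} asserts $f_{A,1}(x)^2 > f_{A,2}(x)$ for all $x > 3$; the strict hypothesis $m > 3$ (rather than $m \geq 3$) is precisely what allows us to include $a=b=1$ without exception. Translating back via $p_B(n) = f_{A,n}(m)$ yields the claim for every $a,b \in \mathbb{N}$.

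The main obstacle is more a subtlety than a genuine difficulty: the entire strength of the corollary over Corollary \ref{cor3.5} lies in the strict hypothesis $m > 3$, which eliminates the single equality case that arose when $m = 3$ and $2 \in B$. Once this observation is in hand, the corollary reduces to a one-line application of Theorem \ref{th:set} combined with the rescaling identity $f_{B,n}(x) = f_{A,n}(mx)$, and no further case analysis or induction is required.
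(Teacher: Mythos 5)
Your proof is correct and follows exactly the route the paper intends (the paper leaves this corollary without an explicit proof): identify $p_B(n)=f_{B,n}(1)=f_{A,n}(m)$ via the rescaling identity $f_{B,n}(x)=f_{A,n}(mx)$ for the underlying set $A$, then apply Theorem \ref{th:set} at $x=m\geq 4$, with the hypothesis $m>3$ ensuring strictness even in the case $a=b=1$. No gaps.
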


\section{The Bessenrodt--Ono type inequality for the A-plane partitions}

In this section we have two general assumptions on the multiset $A$. The first
one is that $1$ occurs exactly once
in $A$. The second one, on the other hand, states that each number $j$ might appear at most $j$ times in $A$---in
other words $\mu(j)\leq j$. In particular, observe that if every number $j$ occurs exactly $j$ times in $A$, then the corresponding $A$-partition function is just the plane partition function $p_A(n)=\op{pp}\left( n\right) $,
and that is the reason for the title of this section. Nevertheless, there is a myriad possibilities to choose the multiset $A$. We can, for instance, consider any set $A$ which satisfies the general assumptions from Section 3. However, we do not need to restrict ourselves only to sets.

Let us assume that $A=\{1,2,2,3,5,5,5\}$. The corresponding polynomials $f_{A,n}(x)$ might be easily calculated by employing the formula $(\ref{1})$. Table 2 presents these polynomials for $1\leq n\leq7$.
    \begin{center}
    \begin{table}
\begin{tabular}{ |c|l| } 
\hline
$n$ & $f_{A,n}(x)$\\
 \hline\hline 
 $1$ & $x$ \\ 
 $2$ & $\frac{1}{2} x (x+5)$ \\ 
 $3$ & $\frac{1}{6} x \left(x^2+15 x+8\right)$  \\ 
 $4$ & $\frac{1}{24} x \left(x^3+30 x^2+107 x+30\right)$  \\ 
 $5$ & $\frac{1}{120} x \left(x^4+50 x^3+455 x^2+550 x+384\right)$ \\ 
 $6$ & $\frac{1}{720} x \left(x^5+75 x^4+1285 x^3+4725 x^2+5194 x+960\right)$  \\ 
 $7$ & $\frac{1}{5040}x \left(x^6+105 x^5+2905 x^4+22575 x^3+49294 x^2+55440 x+720\right)$ \\
 \hline
\end{tabular}
\caption{The polynomials $f_{A,n}(x)$ for $A=\{1,2,2,3,5,5,5\}$ and $1\leq n \leq7$.}
\end{table}
\end{center}

Once again, one can ask about the localization of the complex roots of the polynomials $f_{A,a}(x)f_{A,b}(x)-f_{A,a+b}(x)$. Figure 2 shows all their complex roots for $1\leq a,b \leq10$.

\begin{center}
\begin{figure}[!htb]
   \begin{minipage}{1\textwidth}
     \centering
     \includegraphics[width=1\linewidth]{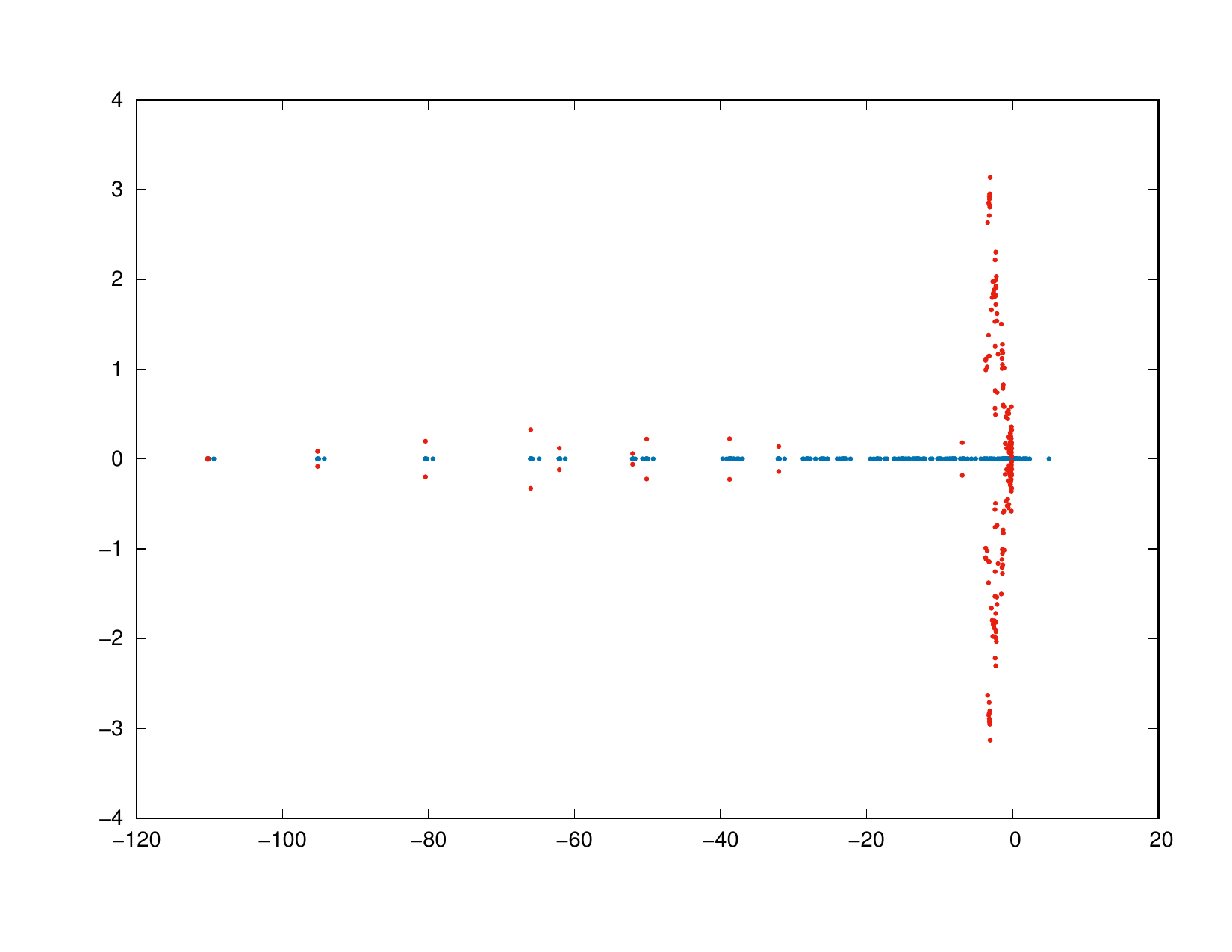}
     \caption{Complex roots of the polynomials $f_{A,a}(x)f_{A,b}(x)-f_{A,a+b}(x)$ for $A=\{1,2,2,3,5,5,5\}$ and $1\leq a,b \leq10$.}
   \end{minipage}\hfill
\end{figure}
\end{center}

Now, we wish to go to the main part of the section and find out the analogue of Theorem \ref{thm2.1}. However, in that case we consider the $5$-colored $A$-partition function which agrees with the result due to Heim, Neuhauser,
and Tr\"{o}ger \cite{HNT2} for the classical plane partition function.

\begin{thm}\label{thm4.1}
    If $A$ is such a multiset of natural numbers that $1$ occurs exactly once
in $A$, and each number $j$ appears at most $j$ times in $A$. Then, the inequality
    \begin{equation*}
        f_{A,a}(5)f_{A,b}(5)>f_{A,a+b}(5)
    \end{equation*}
    holds for all $a,b\in\mathbb{N}$ with $a+b>1$. For $a+b=1$, we have that
    \begin{equation*}
        f_{A,1}(5)f_{A,1}(5)\geq f_{A,2}(5)
    \end{equation*}
    with the equality whenever $2$ occurs exactly two times in $A$.
\end{thm}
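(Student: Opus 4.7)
The plan is to prove Theorem~\ref{thm4.1} by induction on $a+b$, following the same blueprint as the proof of Theorem~\ref{thm2.1} but with $x=3$ replaced by $x=5$ and the bound $\sigma_A(n) \le n(1+\ln n)$ replaced by a quadratic bound on $\sigma_2(n) := \sum_{d\mid n} d^2$. First I would dispose of $a+b \in \{2,3\}$ by direct case analysis: since $f_{A,2}(x) = \tfrac{x}{2}(x + 1 + 2\mu(2))$ with $\mu(2) \in \{0,1,2\}$, one computes $f_{A,1}(5)^2 - f_{A,2}(5) \in \{10, 5, 0\}$, with equality precisely when $\mu(2)=2$; the case $a+b=3$ splits into the twelve sub-cases indexed by $(\mu(2), \mu(3)) \in \{0,1,2\} \times \{0,1,2,3\}$. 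Next I would establish an analogue of Lemma~\ref{Lemma1} covering all pairs $a, b \le N$ for a concrete threshold $N$: for each such pair one enumerates all multisets $A$ with $\operatorname{supp}(A) \subseteq \{1, \ldots, a+b\}$ and $\mu(j) \le j$, and uses Bell's theorem to realise $f_{A,n}(5)$ as an explicit quasi-polynomial in $n$ of period dividing $\lcm(A)$, so that the finitely many instances of $f_{A,a}(5) f_{A,b}(5) > f_{A,a+b}(5)$ can be verified in Mathematica. To tame the combinatorial explosion at the upper end of this range, I would reduce to the ``maximal'' sub-multisets $\{1, r, r, \ldots, r\}$ with $\mu(r) = r$, write out the corresponding quasi-polynomial formula for $f_{\{1, r, \ldots, r\}, n}(5)$, and then propagate the lower bound to all remaining multisets via the monotonicity $f_{A,n}(5) \ge f_{B,n}(5)$ whenever $A \supseteq B$ (a direct consequence of~(\ref{1})).

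For the inductive step, assume the claim for all pairs with sum at most $n$ and consider $a + b = n + 1$ with $a \ge b \ge 1$ and $a > N$. Applying identity~(\ref{1}) with $x = 5$, splitting $\sum_{i=1}^{a+b}$ at $i = a$, applying the induction hypothesis termwise to each $f_{A,a+b-i}(5)$ for $1 \le i \le a$, and re-expanding $f_{A,b}(5)$ via~(\ref{1}) a second time, one arrives at
\begin{align*}
f_{A,a}(5) f_{A,b}(5) - f_{A,a+b}(5) \;>\; \frac{5}{a(a+b)} \sum_{i=1}^{b} \bigl( 5\kappa_a \sigma_A(i) - a \sigma_A(a+i) \bigr) f_{A,b-i}(5),
\end{align*}
where $\kappa_a := \sum_{j=1}^{a} \sigma_A(j)\, f_{A, a-j}(5)$, exactly as in the proof of Theorem~\ref{thm2.1}. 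Since $\sigma_A(i) \ge 1$ and $f_{A,b-i}(5) \ge 0$, it then suffices to verify $5\kappa_a \ge a\, \sigma_A(a+i)$ for every $1 \le i \le b$.

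For the key estimate, the multiset hypothesis $\mu(j) \le j$ gives $\sigma_A(n) = \sum_{d\mid n,\, d\in A} d\cdot\mu(d) \le \sum_{d\mid n} d^2 = \sigma_2(n) \le \zeta(2)\, n^2$, whence $\sigma_A(a+i) \le 4\zeta(2)\, a^2$ for $i \le b \le a$. On the other hand, Lemma~\ref{Lemma0} combined with the monotonicity $f_{A,n}(5) \ge f_{A_I,n}(5) = \binom{n+4}{4}$ and the hockey-stick identity gives
\begin{align*}
\kappa_a \;\ge\; \sum_{j=0}^{a-1} f_{A,j}(5) \;\ge\; \sum_{j=0}^{a-1} \binom{j+4}{4} \;=\; \binom{a+4}{5}.
\end{align*}
Substituting into the required inequality reduces everything to showing
\begin{align*}
\Psi(a) \;:=\; (a+4)(a+3)(a+2)(a+1) \;-\; 96\, \zeta(2)\, a^2 \;\ge\; 0
\end{align*}
for $a \ge N$, which is an elementary one-variable calculus check and pins down the threshold~$N$ used by the base-case lemma.

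The principal obstacle is the loss of a logarithmic factor in the divisor bound: in Theorem~\ref{thm2.1} the set hypothesis gave $\sigma_A(n) \le n(1 + \ln n)$, whereas here $\sigma_A$ can grow like $n^2$, and this is precisely what forces the polynomization exponent up from $3$ to $5$---one must replace the cubic lower bound $\binom{a+2}{3}$ on $\kappa_a$ by the quintic $\binom{a+4}{5}$ in order to dominate the quadratic blow-up of $\sigma_2(a+i)$. A secondary, computational obstacle is that the enumeration in the multiset analogue of Lemma~\ref{Lemma1} is considerably larger than in the set case, so the threshold $N$ must be chosen with some care and the reduction to ``maximal'' sub-multisets $\{1, r, r, \ldots, r\}$ is essential for keeping the Mathematica verification feasible.
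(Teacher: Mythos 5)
Your proposal follows essentially the same route as the paper's proof: induction on $a+b$ with the base cases $a+b\in\{2,3\}$ done by hand, a finite computational verification for small $a,b$ in the style of Lemma~\ref{Lemma1}, and an inductive step that splits the recurrence~(\ref{1}), bounds $\sigma_A(n)$ by $\sigma_2(n)=O(n^2)$ (the paper uses $n(2n-1)$ where you use $\zeta(2)n^2$, which only shifts the explicit threshold $N$; the paper takes $a\ge 9$ and checks $a,b\le 8$), and reduces to the positivity of a quartic via $\kappa_a\ge\binom{a+4}{5}$. The argument is correct and matches the paper's in all essentials.
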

\begin{proof}
    As before, the proof is by the induction on $a+b$. If $a+b=2$, then it is not difficult to check that
    \begin{align*}
        f_{A,2}(5)=\begin{cases}
        25, & \text{if }  2 \text{ appears exactly two times in }A,\\
        20, & \text{if }  2 \text{ appears exactly one time in }A, \\
    15, & \text{if }  2\not\in A,
  \end{cases}
    \end{align*}
    and, thus,
    \begin{align*}
        f_{A,1}(5)f_{A,1}(5)=25\geq f_{A,2}(5),
    \end{align*}
    where the equality occurs if and only if $2$ appears exactly two times in $A$. Let us also verify the statement for $a+b=3$.
    In that case, it is not difficult to check that
    \begin{align*}
        f_{A,3}(5)\leq\begin{cases}
        100, & \text{if }  2 \text{ appears exactly two times in }A,\\
        75, & \text{if }  2 \text{ appears exactly one time in }A, \\
    50, & \text{if }  2\not\in A.
  \end{cases}
    \end{align*}
    Therefore, we immediately obtain that 
    \begin{equation*}
        f_{A,2}(5)f_{A,1}(5)>f_{A,3}(5),
    \end{equation*}
    as required. Now, we use the same methods as in Lemma \ref{Lemma1} to verify the claim for all $a,b\leq8$ and possible choices for $A$. In fact, it turns out that the statement is valid for all the cases. Hence, let us assume that the claim is true for all pairs $a$ and $b$ such that $3\leq a+b\leq n$, and check whether it is also true for $a+b=n+1.$ Without loss of generality we may additionally require that $a\geq b\geq1$ and $a\geq9$.  Once again, applying the formula (\ref{1}) to the difference $f_{A,a}(5)f_{A,b}(5)-f_{A,a+b}(5)$, we get
    \begin{align*}
        &\frac{5}{a}\sum_{j=1}^a\sigma_A(j)f_{A,a-j}(5)f_{A,b}(5)-\frac{5}{a+b}\sum_{i=1}^{a+b}\sigma_A(i)f_{A,a+b-i}(5)\\
        >&\frac{5}{a}\sum_{j=1}^a\sigma_A(j)f_{A,a-j}(5)f_{A,b}(5)-\frac{5}{a+b}\sum_{i=1}^{a}\sigma_A(i)f_{A,a-i}(5)f_{A,b}(5)\\
        &\phantom{\frac{5}{a}\sum_{j=1}^a\sigma_A(j)f_{A,a-j}(5)f_{A,b}(5)}-\frac{5}{a+b}\sum_{i=1}^{b}\sigma_A(a+i)f_{A,b-i}(5)\\
        =&\frac{5}{a(a+b)}\left(b\sum_{j=1}^a\sigma_A(j)f_{A,a-j}(5)f_{A,b}(5)-a\sum_{i=1}^b\sigma_A(a+i)f_{A,b-i}(5)\right),\\
    \end{align*}
    where the second line follows from the induction hypothesis
applied to $f_{A,a+b-i}(5)$ for each $i$ between $1$ and $a$. Thus, it suffices to show that the expression in the bracket is non-negative. Let us put
    \begin{align*}
        \kappa_a:=\sum_{j=1}^a\sigma_A(j)f_{A,a-j}(5).
    \end{align*}
    We see that
\begin{align*}
&b\kappa_af_{A,b}(5)-a\sum_{i=1}^b\sigma_A(a+i)f_{A,b-i}(5)\\
    =&5\kappa_a\sum_{i=1}^b\sigma_A(i)f_{A,b-i}(5)-a\sum_{i=1}^b\sigma_A(a+i)f_{A,b-i}(5)\\
    =&\sum_{i=1}^b\left(5\kappa_a\sigma_A(i)-a\sigma_A(a+i)\right)f_{A,b-i}(5).
\end{align*}
Since $1\leq\sigma_A(n)\leq\sigma_2(n)\leq n(2n-1)$ for each $n\geq1$ and $a\geq b$, we further simplify the above to
\begin{align*}
    5\kappa_a-2a^2(4a-1)\geq5\sum_{j=0}^{a-1}f_{A,j}(5)-2a^2(4a-1)\geq5\binom{a+4}{5}-2a^2(4a-1),
\end{align*}
where the last inequality is the consequence of Lemma \ref{Lemma0}. Therefore, it is enough to prove that the function 
    \begin{align*}
        \Psi(x):=(x+1)(x+2)(x+3)(x+4)-48x(4x-1)
    \end{align*}
    is non-negative for all $x\geq9$ which is not a difficult task. This ends the proof by the law of induction.
\end{proof}

There are a few direct consequences of Theorem \ref{thm4.1}. Since they are very similar to Corollary \ref{cor3.5} and Remark \ref{remark3.6}, we omit them and proceed to the proof of Theorem \ref{th:multiset}.

\subsection{Proof of Theorem~\ref{th:multiset}}
\begin{proof}
    We show the claim by induction on $a+b$. For $a+b=2$, we have that 
    \begin{align*}
        f_{A,1}(x)f_{A,1}(x)-f_{A,2}(x)\geq x^2-x/2(x+5)=x(x-5)/2,
    \end{align*}
which is a consequence of the equality
\begin{align*}
        f_{A,2}(x)=\begin{cases}
        x(x+5)/2, & \text{if }  2\text{ appears exactly two times in }A, \\
    x(x+3)/2, & \text{if }  2\text{ appears exactly one time in }A, \\
    x(x+1)/2, & \text{if } 2\not\in A.
  \end{cases}
    \end{align*}
    For $a+b=3$, on the other hand, one can verify that
  \begin{align*}
      f_{A,2}(x)f_{A,1}(x)-f_{A,3}(x)=\begin{cases}
      x\left(x^2-10\right)/3, & \text{if }  3\text{ occurs three times in }A, \\
      x\left(x^2-7\right)/3, & \text{if }  3\text{ occurs two times in }A, \\
    x(x-2)(x+2)/3, & \text{if }  3\text{ occurs one time in }A, \\
    x(x-1)(x+1)/3, & \text{if } 3\not\in A,
  \end{cases}
  \end{align*}
  and we see that the claim is true for $a+b=3$. Thus, let us suppose that the statement is valid for all  $3\leq a+b\leq n$, and investigate whether it is also correct for $a+b=n+1$ with $a$ and $b$ such that $1\leq b\leq a$ and $a\geq2$. Theorem \ref{thm4.1} asserts that the inequality $f_{A,a}(5)f_{A,b}(5)>f_{A,a+b}(5)$ is true for all positive integers $a$ and $b$ such that $a+b>2$. Now, it suffices to show that the derivative
    \begin{align*}
        \frac{d}{dx}\left(f_{A,a}(x)f_{A,b}(x)-f_{A,a+b}(x)\right)
    \end{align*}
    is positive for all $x\geq5$, because it maintains that $f_{A,a}(x)f_{A,b}(x)-f_{A,a+b}(x)>0$. Since $f_{A,n}(x)f_{A,0}(x)=f_{A,n}(x)$, we get
    \begin{align*}
        &f_{A,a}'(x)f_{A,b}(x)+f_{A,a}(x)f_{A,b}'(x)-f_{A,a+b}'(x)\\
        =&\sum_{j=1}^a\frac{\sigma_A(j)}{j}f_{A,a-j}(x)f_{A,b}(x)+\sum_{j=1}^b\frac{\sigma_A(j)}{j}f_{A,a}(x)f_{A,b-j}(x)-\sum_{j=1}^{a+b}\frac{\sigma_A(j)}{j}f_{A,a+b-j}(x)\\
        >&\sum_{j=1}^a\frac{\sigma_A(j)}{j}f_{A,a-j}(x)f_{A,b}(x)+\sum_{j=1}^b\frac{\sigma_A(j)}{j}f_{A,a}(x)f_{A,b-j}(x)\\
        -&\sum_{j=1}^{a}\frac{\sigma_A(j)}{j}f_{A,a-j}(x)f_{A,b}(x)-\sum_{j=1}^b\frac{\sigma_A(a+j)}{a+j}f_{A,b-j}(x)\\
        =&\sum_{j=1}^b\left(\frac{\sigma_A(j)}{j}f_{A,a}(x)-\frac{\sigma_A(a+j)}{a+j}\right)f_{A,b-j}(x).
    \end{align*}
    Therefore, it suffices to prove that
    \begin{align*}
        \frac{\sigma_A(j)}{j}f_{A,a}(x)-\frac{\sigma_A(a+j)}{a+j}\geq0
    \end{align*}
    for each $j$ between $1$ and $b$. Since $a\geq b$ and $1\leq\sigma_A(n)\leq\sigma_2(n)\leq n(2n-1)$ for every $n\geq1$, it is clear that
    \begin{align*}
        \frac{\sigma_A(j)}{j}f_{A,a}(x)-\frac{\sigma_A(a+j)}{a+j}\geq\frac{1}{a}\binom{x+a-1}{a}-4a+1
    \end{align*}
    is valid for each such a $j$. However, the assumption that $x\geq5$ points out that
    \begin{align*}
        \frac{1}{a}\binom{x+a-1}{a}-4a+1\geq (a+1)(a+2)(a+3)(a+4)/(24a)-4a+1.
    \end{align*}
    Now, we just examine when the function 
    \begin{align*}
        \psi(x):=(x+1)(x+2)(x+3)(x+4)-96x^2+24x
    \end{align*}
    is positive. One can easily check that it is positive for every positive real number $x$. This completes the proof by the law of induction. 
\end{proof}

It is worth pointing out that Theorem \ref{th:multiset} agrees with the result of Heim, Neuhauser,
and Tr\"{o}ger \cite[Theorem 1.3]{HNT2}, for the polynomization of the plane partition function $\op{pp}\left( n\right) $.
Actually, Theorem \ref{th:multiset} is a general property which might be successfully applied for various multisets $A$. In particular, it extends Theorem \ref{th:set} for all $x\geq5$.

Nevertheless, there is still a wide class of multisubsets for which we can utilize neither Theorem \ref{th:set} nor Theorem \ref{th:multiset}, and that
gap to our current knowledge will be investigated in the future.

\section*{Acknowledgements}
The first author would like to thank Piotr Miska, Maciej Ulas,
and Błażej Żmija for their valuable comments and helpful suggestions. His research was funded by both a grant of the National Science Centre (NCN), Poland, no. UMO-2019/34/E/ST1/00094 and a grant from the Faculty of Mathematics and Computer Science under the Strategic Program Excellence Initiative at the Jagiellonian University in Kraków.

\end{document}